\documentclass[11pt,reqno]{amsart}
\setlength{\hoffset}{-.5in}
\setlength{\voffset}{.25in}
\usepackage{amssymb,latexsym}
\usepackage{graphicx}
\usepackage{fancyhdr,amssymb}
\usepackage{url}		

\textwidth=6.175in
\textheight=8.5in

\theoremstyle{plain}
\numberwithin{equation}{section}
\newtheorem{thm}{Theorem}[section]
\newtheorem{theorem}[thm]{Theorem}
\newtheorem{lemma}[thm]{Lemma}
\newtheorem{example}[thm]{Example}
\newtheorem{definition}[thm]{Definition}
\newtheorem{proposition}[thm]{Proposition}
\newtheorem{corollary}[thm]{Corollary}
\newtheorem{remark}[thm]{Remark}

\pagestyle{fancy}

\begin{document}
\fancyhead{}
\renewcommand{\headrulewidth}{0pt}
\fancyfoot{}
\fancyfoot[LE,RO]{\medskip \thepage}
\fancyfoot[LO]{\medskip MONTH YEAR}
\fancyfoot[RE]{\medskip VOLUME , NUMBER }

\setcounter{page}{1}

\title[Three analogues of Stern's diatomic sequence]{Three Analogues of Stern's diatomic sequence}
\author{Sam Northshield}
\address{Department of Mathematics\\
              SUNY-Plattsburgh\\
                Plattsburgh, NY 12901}
\email{northssw@plattsburgh.edu}

\begin{abstract}  We present three analogues of Stern's diatomic sequence.
\end{abstract}

\maketitle

\section{Introduction}

Stern's diatomic sequence $a_1=1,  a_{2n}=a_n,  a_{2n+1}=a_n+a_{n+1}$
is a particularly well studied sequence (see, e.g., \cite{B1}, \cite{L}, \cite{N} and references therein,   as well as \cite{SW}).    
The first section is devoted to showing that this sequence is \textit{interesting}.    In particular, we shall look at the following properties.   

\begin{itemize}
\item $n\mapsto a_{n+1}/a_n$ is a bijection between the positive natural numbers and the positive rational numbers,
\item $n/2^k\mapsto a_n/a_{n+2^k}$ extends to a continuous strictly increasing function on $[0,1]$  known as ``Conway's box function'' (it's inverse is $?(x)$, Minkowski's question-mark function),
\item  It shares a number of similarities to the Fibonacci sequence;  in particular, it has a Binet type formula.  
\end{itemize}
\vskip .1 in
The remaining three sections are devoted to three analogues of Stern's sequence:

\begin{itemize}
\item  We replace addition by another binary operation;  in particular, we define $b_1=0, b_{2n}=b_n, b_{2n+1}=b_n\oplus b_{n+1}$ where $x\oplus y=x+y+\sqrt{4xy+1}$.  This sequence is related to Stern's sequence and arises from certain sphere packings.    It has apparently not appeared before in the literature. 
\item  We replace the complementary indexing sequences $\{2n\}$ and $\{2n+1\}$ by another pair of complementary sequences;  in particular, let $R_1=1, R_{\alpha(n)}=R_n, R_{\beta(n)}=R_n+R_{n+1}$ where $\alpha(n)=\lfloor n\phi-1/\phi^2\rfloor$, $\beta(n)=\lfloor n\phi^2+\phi\rfloor$ form a specific pair of complementary Beatty sequences.  This sequence has been extensively studied as $R_n$ is the number of ways $n$ can be represented as a sum of distinct Fibonacci numbers.   
\item  The known Binet type formula for Stern's sequence \cite{N} is written in terms of the sequence $s_2(n)$ (:= the number of terms in the binary expansion of $n$).   We  replace $s_2(n)$by $s_F(n)$(:= the number of terms in the Zeckendorf representation of $n$).     This new sequence,  apparently not studied before, is an integer sequence with several interesting (and several conjectural) properties.
\end{itemize}

\vskip .4 in
\section{Stern's Diatomic Sequence}

Consider the following ``diatomic array" \cite{B1} formed as a variant of Pascal's triangle;   each entry is either the value directly above or else the sum of the two above it.
$$\begin{array}{cccccccccccccccccc}1&&&&&&&&&&&&&&&&1\\1&&&&&&&&2&&&&&&&&1\\1&&&&3&&&&2&&&&3&&&&1
\\1&&4&&3&&5&&2&&5&&3&&4&&1\\1&5&4&7&3&8&5&7&2&7&5&8&3&7&4&5&1\\.&.&.&.&.&.&.&.&.&.&.&.&.&.&.&.&.
\end{array}$$
The word ``diatomic" is used here since every entry of the diatomic array gets its value from either one or two entries above and gives that value to three entries below, hence has ``valence" 4 or 5 (hence the diatomic array models a kind of crystalline alloy of two elements).   

Ignoring the right most column and reading the numbers as in a book, we get Stern's diatomic sequence:
$$1,1,2,1,3,2,3,1,4,3,5,2,5,3,4,1,5,...$$
The sequence is thus defined by the recurrence
$$a_1=1,  a_{2n}=a_n,  a_{2n+1}=a_n+a_{n+1}. \eqno{(1)}$$
We define $a_0$ to be 0 (the value consistent with $a_{2\cdot 0+1}=a_0+a_1$).  

Perhaps the most celebrated property of this sequence is that 
every positive rational number is represented exactly once as  $a_{n+1}/a_n$.  See, for example, \cite{CW} or \cite{N}.     We rephrase this fact as a theorem.

\begin{theorem}  Every ordered pair of relatively prime positive integers appears exactly once in the sequence $\{(a_n,a_{n+1})\}$.  \end{theorem}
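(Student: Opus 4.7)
The plan is to establish existence and uniqueness separately, using induction in opposite directions: induction on $p+q$ for existence, and induction on $n$ for uniqueness. Both arguments rest on the observation that the recurrence (1) can be ``inverted'' by a single Euclidean subtraction.

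As a preliminary I would record, by a routine induction on $n$ that splits on the parity of $n$, that $a_n \ge 1$ and $\gcd(a_n, a_{n+1}) = 1$ for every $n \ge 1$. (For example, $\gcd(a_{2n+1}, a_{2n+2}) = \gcd(a_n + a_{n+1}, a_{n+1}) = \gcd(a_n, a_{n+1})$.) This guarantees that every pair produced by the sequence is a coprime pair of positive integers.

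For existence, I induct on $p+q$ among coprime pairs $(p,q)$ with $p,q \ge 1$. The base case is $(1,1) = (a_1, a_2)$. For the step, coprimality together with $(p,q) \ne (1,1)$ forces $p \ne q$. If $p > q$ then $(p-q, q)$ is still coprime with smaller sum, so by hypothesis $(p-q, q) = (a_m, a_{m+1})$ for some $m \ge 1$, and (1) gives
\[
(a_{2m+1}, a_{2m+2}) = (a_m + a_{m+1},\, a_{m+1}) = (p, q).
\]
The case $p < q$ is symmetric and yields the pair at index $2m$ via $(a_{2m}, a_{2m+1}) = (a_m,\, a_m + a_{m+1})$.

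For uniqueness, I would show that $n$ is recoverable from $(a_n, a_{n+1})$. From (1), for $k \ge 1$ one has $a_{2k} < a_{2k+1}$ and $a_{2k+1} > a_{2k+2}$, so for any $n \ge 2$ the parity of $n$ is determined by which coordinate of $(a_n, a_{n+1})$ is larger. Once the parity is known, the ``parent'' pair $(a_{\lfloor n/2\rfloor}, a_{\lfloor n/2\rfloor+1})$ is obtained from $(a_n, a_{n+1})$ by subtracting the smaller coordinate from the larger, so induction on $n$ finishes the argument. The one thing to check by hand is that $(1,1)$ occurs only at $n=1$; this follows from $a_k \ge 1$ for all $k \ge 1$, which makes both $a_{2k} = a_k = 1,\ a_{2k+1} = 1$ and $a_{2k+1} = 1,\ a_{2k+2} = a_{k+1} = 1$ (with $k \ge 1$) immediately impossible.

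The main obstacle is the bookkeeping around $(1,1)$: it is the root of the descent in the existence argument and the single pair where the parity-from-inequality criterion fails, so it must be handled as a separate base case on both sides. Once that pair is isolated, the remainder of the proof is a clean two-sided induction driven by the identity (1) and its Euclidean inverse.
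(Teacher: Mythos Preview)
Your proof is correct and follows essentially the same route as the paper's: both arguments hinge on the Euclidean ``subtract the smaller coordinate from the larger'' map and the fact that it carries $(a_{2k},a_{2k+1})$ and $(a_{2k+1},a_{2k+2})$ to $(a_k,a_{k+1})$. The only differences are cosmetic---you package existence as induction on $p+q$ and uniqueness as recovering $n$ via the parity criterion, while the paper phrases both as minimal-counterexample arguments---and you are more explicit about the preliminaries ($a_n\ge 1$, coprimality) and the $(1,1)$ base case than the paper is.
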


\begin{proof} 
For an ordered pair, consider the process of subtracting the smallest from largest (stop if equal).    For example,
 $(4,5)\mapsto (4,1)$ and $(7,3)\mapsto (4,3)$.  By the definition of Stern's sequence, 
$$(a_{2n},a_{2n+1}), (a_{2n+1},a_{2n+2})\longmapsto (a_n, a_{n+1}).$$
Every relatively prime pair appears (if not, then there is an ordered pair not on the list with lowest sum.  Apply the process;  the result has lower sum and so is $(a_n, a_{n+1})$ for some $n$ and so the original pair is either $(a_{2n},a_{2n+1})$ or $(a_{2n+1},a_{2n+2})$).   Every relatively prime pair appears 
\textit{exactly} once since, if not, then there exist $m<n$ with $(a_m,a_{m+1})=(a_n,a_{n+1})$ and such that $m$ is as small as possible.    Applying the process to both implies $\lfloor m/2\rfloor=\lfloor n/2\rfloor$ and thus $a_m=a_{m+1}=a_{m+2}$ which is impossible.   \end{proof}

One can then rewrite any sum over relatively prime pairs in terms of Stern's sequence.  As an example, we rephrase the Riemann hypothesis.  First note that $n\longmapsto {a_{2n}/a_{2n+1}}$ is an explicit bijection from ${\Bbb Z}^+$ to ${\Bbb Q}\cap (0,1)$.
Then the Riemann hypothesis is equivalent to $$\displaystyle\sum_{a_{2n+1}<x} e^{2\pi i a_{2n}/a_{2n+1}}=O(x^{1/2+\epsilon}).$$
Briefly why this is so:  the M\"obius function can be written as $\mu(n):=\sum_{1\le k\le n, \gcd(n,k)=1}  e^{2\pi i k/n}$ and so the left hand side is really just Merten's function $M(x):=\sum_{n<x} \mu(n)$.   The connection between Merten's function and the Riemann hypothesis is well-known;   see for example \cite{E}.   

Minkowski's question mark function was introduced in 1904 as an example of a ``singular function" (it is strictly increasing yet its derivative exists and equals 0 almost everywhere).    It is defined in terms of continued fractions:
  $$?(x)=2\displaystyle\sum_{n=1}^{\infty}\dfrac{(-1)^{n+1}}{2^{a_1+a_2+...+a_n}}$$
  where $x=1/(a_1+1/(a_2+1/(a_3+... )))$.    By Lagrange's theorem that states that the continued fraction representation of a quadratic surd must eventually repeat,  it is clear that  $?(x)$ takes quadratic surds to rational numbers.

The function $$f:\dfrac k{2^n}\longmapsto \dfrac{a_k}{a_{2^n+k}}$$
extends to a continuous strictly increasing function on $[0,1]$.
This function is known as ``Conway's box function" and its inverse is Minkowski's question mark function $?(x)$.
\begin{figure}[htbp] 
   \centering
   \includegraphics[width=1.5in]{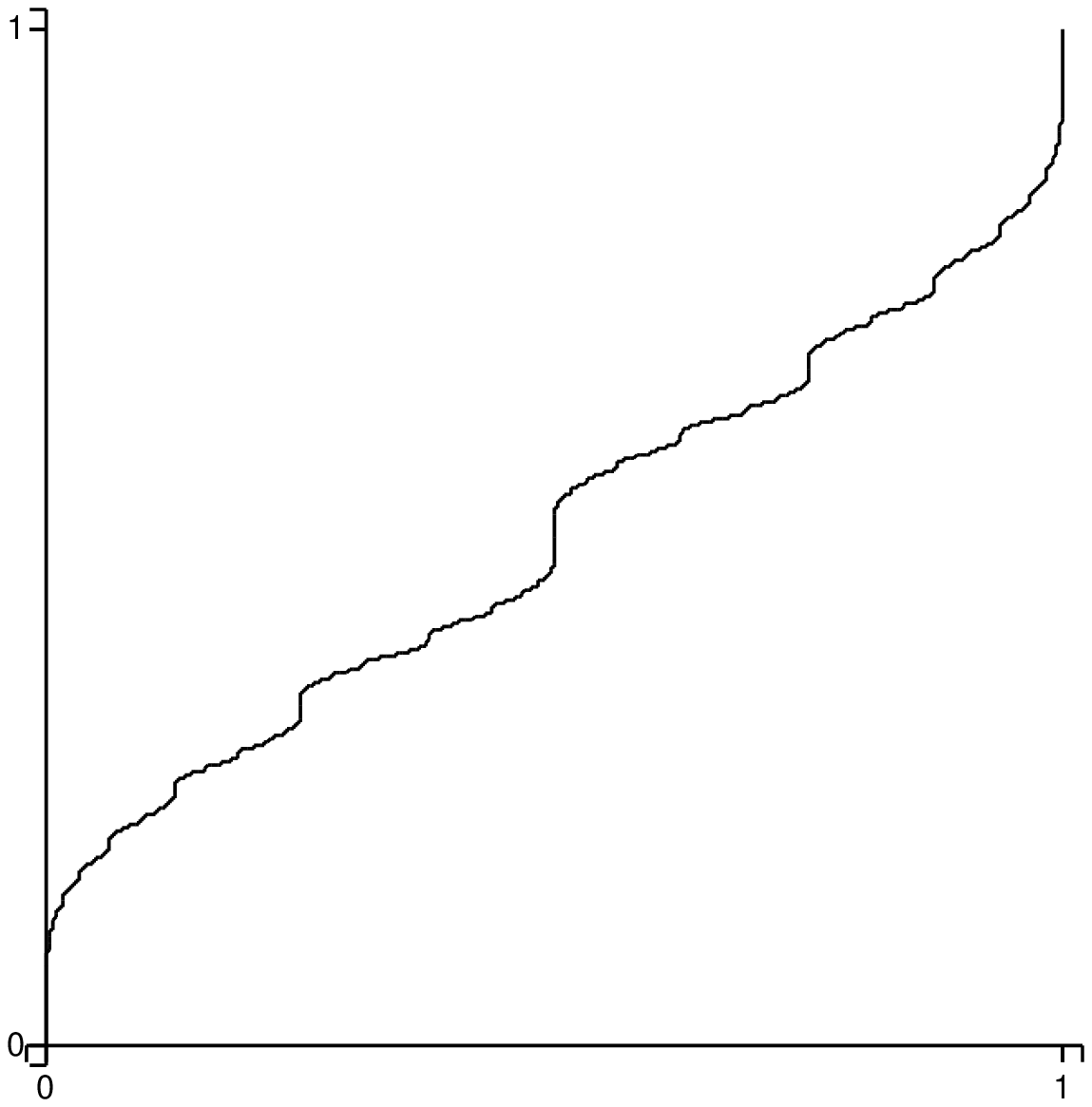} 
    \includegraphics[width=1.5in]{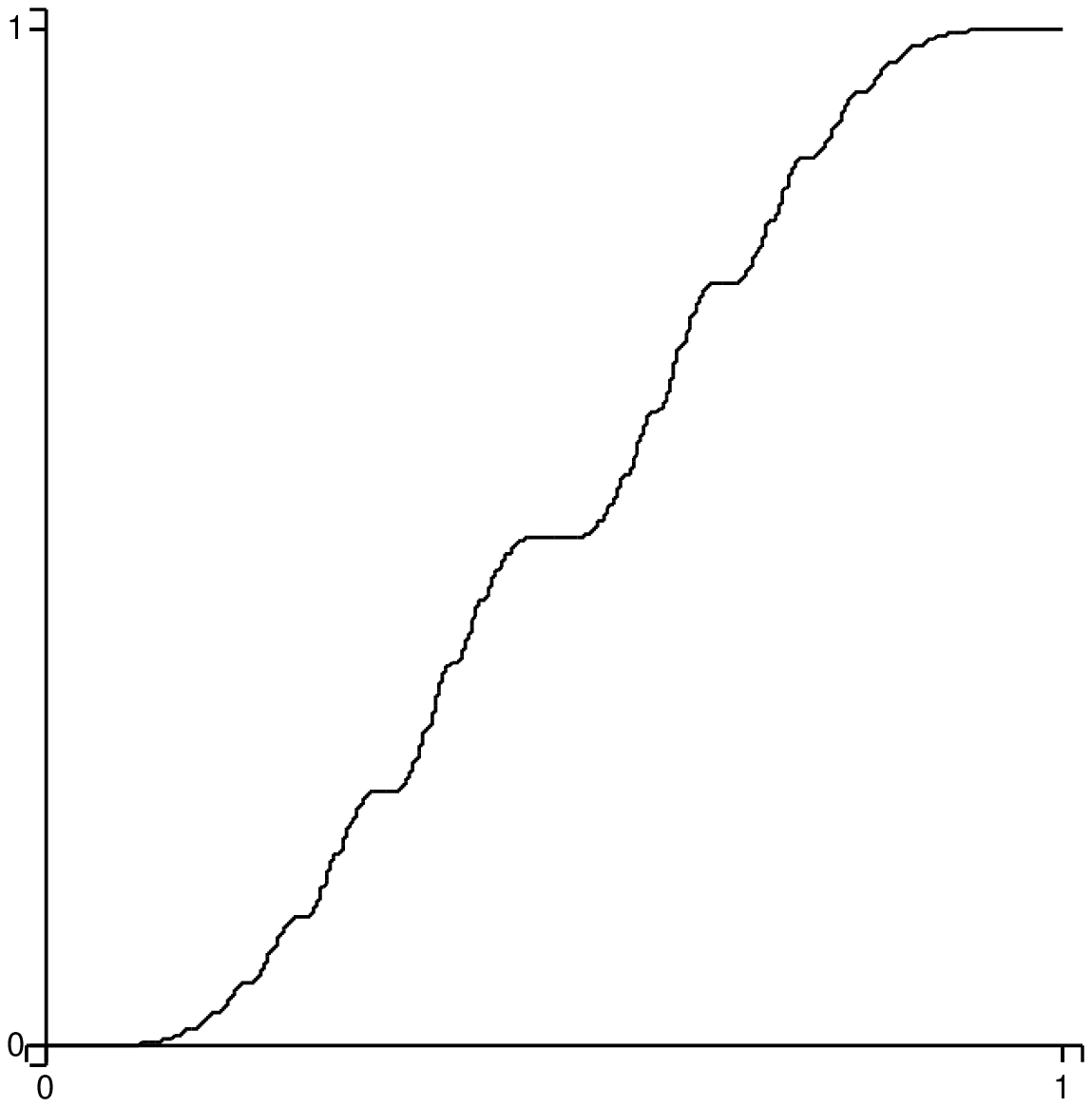} 
   \caption{The graphs of $y=f(x)$ and its inverse $y=?(x)$.}
   \label{fig. 1}
\end{figure}
  See \cite{N} for a proof.

The functions $f(x)$ and $?(x)$ extends to homeomorphisms (or, equivalently, are restrictions of homeomorphisms) between two fractals.     
\begin{figure}[htbp] 
   \centering
   \includegraphics[width=4in]{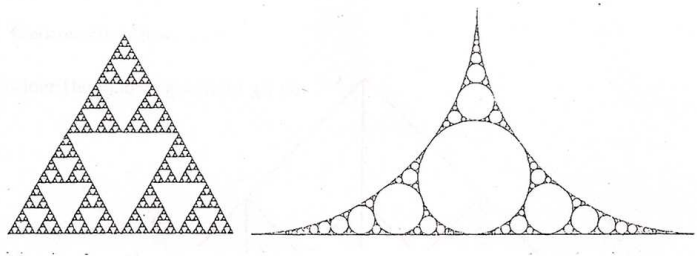} 
   \caption{Sierpinski gasket and an Apollonian circle packing}
   \label{fig. 2}
\end{figure}

 Stern's sequence is related to the Fibonacci sequence in a number of ways.    For example, the Fibonacci sequence is a subsequence:
 $$\begin{array}{cccccccccccccccccc} \framebox 1&&&&&&&&&&&&&&&&1\\1&&&&&&&&\framebox 2&&&&&&&&1\\1&&&&\framebox 3&&&&2&&&&3&&&&1
\\1&&4&&3&&\framebox 5&&2&&5&&3&&4&&1\\1&5&4&7&3&\framebox 8&5&7&2&7&5&8&3&7&4&5&1\\.&.&.&.&.&.&.&.&.&.&.&.&.&.&.&.&.
\end{array}$$
It is easy to see that 
$$a_{J(n)}=F_n \eqno{(2)}$$
where $J(n):=(2^n-(-1)^n)/3$ is the Jacobsthal sequence \cite[A001405]{S}.

 A new result by Coons and Tyler \cite{CT} identifies and proves the asymptotic upper bound:
 $$\limsup_{n\rightarrow\infty}\dfrac{a_n}{(3n)^{\log_2\phi}}=\dfrac1{\sqrt 5}.$$
  The constants involved in this formula are perhaps not so surprising since, by formula (2), it is clear that
   $$\lim_{n\rightarrow\infty}{a_{J(n)}}/({(3{J(n)})^{\log_2\phi}})=1/{\sqrt 5}.$$

Stern's sequence has a few remarkable similarities to the Fibonacci sequence (see \cite{N} and \cite{Nmod2}).  
For example,  Stern's sequence satisfies a modified Fibonacci recurrence: 
 $$a_{n+1}=a_n+a_{n-1}-2(a_{n-1}\mod {a_n}).$$
 Next, certain diagonal sums across Pascal's triangle yield the Fibonacci sequence while the corresponding sums across Pascal's triangle modulo 2 yield Stern's sequence:
 $$\begin{matrix}&&&&1&&&&\\&&&1&&1&&&\\&&1&&2&&\framebox 1&&\\&1&&\framebox3&&3&&1&\\ \framebox1&& 4&&6&&4&&1\\ 
&\cdot&\cdot&\cdot&\cdot&\cdot&\cdot\\ \end{matrix},\begin{matrix}&&&&1&&&&\\&&&1&&1&&&\\&&1&&0&&\framebox 1&&\\&1&&\framebox1&&1&&1&\\ \framebox1&& 0&&0&&0&&1\\ 
&\cdot&\cdot&\cdot&\cdot&\cdot&\cdot\\ \end{matrix}$$
 $$F_{n+1}=\sum_{2i+j=n}{{i+j}\choose i},  \hskip .2 in a_{n+1}=\sum_{2i+j=n} \left[{{i+j}\choose i}\bmod 2\right]$$

Recall
 Binet's formula
  $$F_{n+1}=\dfrac{\phi^{n+1}-\overline\phi^{n+1}}{\phi-\overline\phi}=\displaystyle\sum_{k=0}^n \phi^k\overline\phi^{n-k}.\eqno{(3)}$$
    Stern's sequence satisfies a similar formula:
  $$a_{n+1}=\displaystyle\sum_{k=0}^n \sigma^{s_2(k)}\overline\sigma^{s_2(n-k)}$$
 where $\sigma:=(1+\sqrt{-3})/2$ is a sixth root of unity and $s_2(n)$ is the number of ones in the binary expansion of $n$ 
 \cite[A000120]{S}:
$$\begin{matrix}n&0&1&2&3&4&5&6&7&8...\\
 s_2(n)&0&1&1&2&1&2&2&3&1...\end{matrix}$$
Why this is true:   If 
 $G(x):=\sum \sigma^{s_2(n)}x^n$ and $F(x)$ is the generating function for $\{a_{n+1}\}$, then $G(x)=(1+\sigma x)G(x^2)$
 and $F(x)=(1+x+x^2)F(x^2)$.   For real $x$, since $(1+\sigma x)(1+\overline\sigma x)=1+x+x^2$,  $|G(x)|^2=F(x)$ and the result follows by equating coefficients.

\vskip .4 in
\section{Replacing addition by another operation}

\begin{definition}  For non-negative real numbers $a,b$,   let 
$$\begin{aligned} a\oplus b&=a+b+\sqrt{4ab+1}\\
a\ominus b&=a+b-\sqrt{4ab+1}\end{aligned}.$$\end{definition}

\begin{proposition}  If $a,b,c,d>0$ and $|ad-bc|=1$ then $(ac)\oplus(bd)=(a+b)(c+d)$.   \end{proposition}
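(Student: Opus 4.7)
The plan is to verify the identity by pure algebraic computation, reducing the claim to the well-known identity $(ad+bc)^2 - (ad-bc)^2 = 4abcd$.

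First I would write out both sides explicitly. From the definition, $(ac)\oplus(bd) = ac + bd + \sqrt{4(ac)(bd)+1} = ac+bd+\sqrt{4abcd+1}$, while $(a+b)(c+d) = ac + ad + bc + bd$. After subtracting the common terms $ac+bd$ from both sides, the identity to establish reduces to
\[
\sqrt{4abcd+1} = ad + bc.
\]

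Next I would justify squaring. Because $a,b,c,d>0$, the right-hand side $ad+bc$ is strictly positive, so squaring is reversible. The squared identity reads $4abcd+1 = (ad+bc)^2$, equivalently $1 = (ad+bc)^2 - 4abcd = (ad-bc)^2$, which is precisely the hypothesis $|ad-bc|=1$. Running this chain of equivalences backward yields the claim.

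There is no real obstacle here; the argument is a one-line Brahmagupta-style manipulation. The only thing worth flagging is the sign condition: one needs $ad+bc>0$ to conclude $\sqrt{4abcd+1}=ad+bc$ rather than its negative, which is why the positivity hypothesis on $a,b,c,d$ is included in the statement (the identity $|ad-bc|=1$ alone would not suffice if, e.g., $ad+bc$ could be negative). So the write-up would consist simply of the computation above together with a brief remark on positivity.
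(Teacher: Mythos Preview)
Your proof is correct and follows essentially the same route as the paper's: both reduce the identity to $(ad+bc)^2 = 4abcd + 1$, which is just the hypothesis $(ad-bc)^2 = 1$ rewritten. Your additional remark about needing $ad+bc>0$ to extract the square root is a worthwhile clarification that the paper leaves implicit.
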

\begin{proof}  If $(ad-bc)^2=1$, then $$(ad+bc)^2=1+4abcd$$
and thus  
$$(ac)\oplus (bd)=ac+bd+\sqrt{4abcd+1}=ac+bd+ad+bc.$$
\end{proof}

\begin{remark}  \rm{By the Fibonacci identity
$$F_{n-1}F_{n+1}=F_n^2+(-1)^n,$$
it follows that
$$(F_{n-1}F_n)\oplus(F_nF_{n+1})=(F_{n-1}+F_n)(F_{n}+F_{n+1})=F_{n+1}F_{n+2}.$$
and so the sequence $x_n:=F_nF_{n+1}$ satisfies the modified Fibonacci recurrence
$$x_{n+1}=x_n\oplus x_{n-1}.$$}
\end{remark}
\vskip .2 in

Here we define the first new sequence.  
\begin{definition}  Let $b_1=0$, and for $n\ge 1$,
$$\begin{aligned}  &b_{2n}=b_n\\  &b_{2n+1}=b_n\oplus b_{n+1}.\end{aligned}$$\end{definition}
The sequence begins $$0,0,1,0,2,1,2,0,3,2,6,1,6,2,3,0,4,3,10,2,15,6,12,1,12,6,15,...$$

It is
not immediately clear that this sequence must always be integral.   One way to show this is to express each $b_k$ as a product of elements of Stern's sequence (Theorem 3.6, below).   First we must prove a lemma.

\begin{lemma}  For $m,n\ge 0$, if $m+n=2^j-1$ then $a_{m+1}a_{n+1}-a_ma_n=1$. \end{lemma}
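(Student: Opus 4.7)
The plan is to proceed by induction on $j$. For the base case $j=0$ we have $m=n=0$, and the identity reduces to $a_1^2 - a_0^2 = 1-0 = 1$, which holds by our convention $a_0=0$, $a_1=1$.

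For the inductive step, assume the identity for exponent $j-1$ and let $m+n = 2^j-1$ with $j\ge 1$. Then $m+n$ is odd, so exactly one of $m,n$ is even and the other odd. Note that the quantity $a_{m+1}a_{n+1} - a_m a_n$ is symmetric in $m$ and $n$, so without loss of generality we may take $m=2k$ and $n=2\ell+1$, in which case $k+\ell = 2^{j-1}-1$. The idea is then to use the defining recurrences
$$a_{2k}=a_k,\quad a_{2k+1}=a_k+a_{k+1},\quad a_{2\ell+1}=a_\ell+a_{\ell+1},\quad a_{2\ell+2}=a_{\ell+1}$$
to rewrite the four quantities $a_m, a_{m+1}, a_n, a_{n+1}$ in terms of $a_k, a_{k+1}, a_\ell, a_{\ell+1}$. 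A routine expansion and cancellation then collapses $a_{m+1}a_{n+1} - a_m a_n$ to $a_{k+1}a_{\ell+1} - a_k a_\ell$, and this equals $1$ by the inductive hypothesis applied to the pair $(k,\ell)$ with $k+\ell = 2^{j-1}-1$.

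I do not anticipate any real obstacle: the condition $m+n = 2^j-1$ is precisely what makes the pair $(m,n)$ split, under the Stern recurrence, into a pair $(k,\ell)$ with $k+\ell = 2^{j-1}-1$, so the induction is tailor-made for the recurrence. The only thing to double-check is that the cross terms cancel correctly in the simplification, but in each of the two symmetric cases a single line of algebra suffices.
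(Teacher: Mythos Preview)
Your proof is correct and essentially identical to the paper's: both proceed by induction on $j$, use the parity of $m+n=2^j-1$ to write (up to symmetry) $m=2k$, $n=2\ell+1$ with $k+\ell=2^{j-1}-1$, and then apply the Stern recurrences to reduce $a_{m+1}a_{n+1}-a_ma_n$ to $a_{k+1}a_{\ell+1}-a_ka_\ell$. The only cosmetic differences are that you start the induction at $j=0$ rather than $j=1$ and choose the opposite parity convention for $m,n$; the one-line algebra you allude to is exactly what the paper writes out.
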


\begin{proof}  We prove this by induction on $j$.   If $m+n=1$, then $a_{m+1}a_{n+1}-a_ma_n=a_1a_2-a_0a_1=1$ and the result holds for $j=1$.   Suppose now that the result holds for a fixed $j$ and that $m+n=2^{j+1}-1$.   Without loss of generality,  $m=2k+1$ and $n=2l$ for some $k,l\ge 0$ (and so $k+l=2^j-1$).   Then
$$\begin{aligned}   a_{m+1}a_{n+1}-a_ma_n&=a_{2k+2}a_{2l+1}-a_{2k+1}a_{2l}\\&=a_{k+1}(a_l+a_{2l+1})-(a_k+a_{k+1})a_{l}=a_{k+1}a_{l+1}-a_ka_l=1\end{aligned}$$ and the result follows.  \end{proof}

\begin{theorem}  If $2^j\le k\le 2^{j+1}$, then
$$b_k=a_{2^{j+1}-k}a_{k-2^j}.$$
\end{theorem}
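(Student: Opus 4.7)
The plan is to induct on $j$. The base case $j=0$ requires only verifying that $b_1 = a_1 a_0 = 0$ and $b_2 = a_2 a_0 = 0$, both immediate from $a_0 = 0$ and the definitions $b_1 = 0$, $b_2 = b_1$. For the inductive step, I would assume the formula holds for $j$ and deduce it for $j+1$, splitting according to the parity of $k \in [2^{j+1}, 2^{j+2}]$.

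The even case $k = 2n$ is quick: here $2^j \le n \le 2^{j+1}$, so by the inductive hypothesis $b_k = b_n = a_{2^{j+1}-n}\, a_{n - 2^j}$, and applying the recurrence $a_{2m} = a_m$ to each factor rewrites this as $a_{2^{j+2}-k}\, a_{k - 2^{j+1}}$, as desired.

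The odd case carries the bulk of the argument and is where Lemma 3.5 and Proposition 3.2 enter. Writing $k = 2n+1$ with $2^j \le n \le 2^{j+1} - 1$, set $p := n - 2^j$ and $q := 2^{j+1} - n - 1$, so that $p + q = 2^j - 1$. The inductive hypothesis then gives $b_n = a_{q+1}\, a_p$ and $b_{n+1} = a_q\, a_{p+1}$, while Lemma 3.5 supplies $a_{q+1}\, a_{p+1} - a_q\, a_p = 1$. Proposition 3.2, applied with $(a,b,c,d) = (a_{q+1}, a_q, a_p, a_{p+1})$, then yields
$$b_k = b_n \oplus b_{n+1} = (a_{q+1} a_p)\oplus(a_q a_{p+1}) = (a_{q+1} + a_q)(a_p + a_{p+1}) = a_{2q+1}\, a_{2p+1},$$
using Stern's recurrence $a_m + a_{m+1} = a_{2m+1}$ in the last step. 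The identities $2p+1 = k - 2^{j+1}$ and $2q+1 = 2^{j+2} - k$ finish the induction.

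The main obstacle is a small technicality: Proposition 3.2 is stated for strictly positive $a,b,c,d$, whereas in the induction $a_p$ or $a_q$ may vanish, precisely when $p = 0$ or $q = 0$, i.e., at the endpoints $n = 2^j$ or $n = 2^{j+1} - 1$. However, inspection of the proof of Proposition 3.2 shows that it only uses the algebraic identity $(ad+bc)^2 = 1 + 4abcd$ together with non-negativity of the relevant square root, and therefore extends verbatim to non-negative $a,b,c,d$; alternatively, these two edge cases can be dispatched directly using the easily verified identity $a_{2^{j+1}-1} = a_{2^j - 1} + 1$.
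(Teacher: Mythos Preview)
Your proof is correct and follows essentially the same route as the paper: both arguments verify that $a_{2^{j+1}-k}a_{k-2^j}$ satisfies the defining recurrences of $b_k$, handling the even case via $a_{2m}=a_m$ and the odd case via Lemma~3.5 together with Proposition~3.2. Your explicit induction on $j$ is just a repackaging of the paper's verification that $x_{2k}=x_k$ and $x_{2k+1}=x_k\oplus x_{k+1}$, and your remark on the positivity hypothesis in Proposition~3.2 is a careful observation that the paper itself glosses over.
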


\begin{proof}  If $k=2^j$ then, because $a_0=0$, $b_k=0=a_{2^{j+1}-k}a_{k-2^j}=a_{2^{j}-k}a_{k-2^{j-1}}$.  

Let $x_k:=a_{2^{j+1}-k}a_{k-2^j}$ where $k\in(2^j, 2^{j+1}).$   Then $2k,2k+1\in(2^{j+1},2^{j+2})$ and thus
$$x_{2k}=a_{2^{j+1}-2k}a_{2k-2^j}=a_{2^j-k}a_{k-2^{j-1}}=x_k$$
and, by lemma 3.5 and proposition 3.2,
$$\begin{aligned} x_{2k+1}&=a_{2^{j+1}-(2k+1)}a_{2k+1-2^j}\\
&=a_{2(2^j-k-1)+1}a_{2(k-2^{j-1})+1}\\
&=(a_{2^j-k-1}+a_{2^j-k})\cdot(a_{k-2^{j-1}}+a_{k+1-2^{j-1}})\\
&=(a_{2^j-k-1}a_{k+1-2^{j-1}})\oplus(a_{2^j-k}a_{k-2^{j-1}})=x_{k+1}\oplus x_k\end{aligned}.$$
Hence $b_k=x_k$ for all $k$, and the result follows.\end{proof}

\begin{corollary}  $b_n\in {\Bbb N}$.  \end{corollary}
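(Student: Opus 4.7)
The plan is to deduce this directly from Theorem~3.6 together with the trivial observation that Stern's sequence is integer-valued. Given any $n \ge 1$, choose the unique integer $j \ge 0$ with $2^j \le n \le 2^{j+1}$. Then Theorem~3.6 gives the closed form $b_n = a_{2^{j+1}-n}\, a_{n-2^j}$, which exhibits $b_n$ as a product of two terms of Stern's sequence.

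It therefore suffices to observe that $a_m \in \mathbb{N}$ (including $0$) for every $m \ge 0$. This is immediate from the defining recurrence (1) together with the convention $a_0 = 0$: the base cases $a_0 = 0$ and $a_1 = 1$ are non-negative integers, and if $a_k$ is a non-negative integer for all $k \le 2n$, then $a_{2n} = a_n$ and $a_{2n+1} = a_n + a_{n+1}$ are also non-negative integers. A quick induction on the index thus confirms $a_m \in \mathbb{N}$ for all $m$. Since the product of two non-negative integers is a non-negative integer, $b_n \in \mathbb{N}$.

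There is no real obstacle here; the entire content of the corollary has already been absorbed into Theorem~3.6, whose factorization into integer Stern terms is the non-obvious fact. The only ``check'' is to make sure the index range $2^j \le n \le 2^{j+1}$ indeed covers every $n \ge 1$ (it does, with harmless overlap at powers of two where both values of $j$ yield $b_{2^j} = 0$), and to remember that the case $n = 0$ gives $b_0 = 0 \in \mathbb{N}$ by the convention used for $a_0$. So the proof will be a one-line appeal to Theorem~3.6.
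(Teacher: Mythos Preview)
Your proposal is correct and matches the paper's intent exactly: the corollary is stated without proof precisely because Theorem~3.6 already expresses each $b_n$ as a product of two Stern terms, which are non-negative integers. One small quibble: the paper defines $b_n$ only for $n\ge 1$, so there is no $b_0$ to discuss---just drop that parenthetical remark.
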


As seen in section 2, Stern's diatomic sequence leads to a construction of Conway's box function $f(x)$, the inverse of Minkowski's question-mark function.  
The sequence $\{b_k\}$ gives rise to a similar function that turns out to be closely related to $f(x)$.   

\begin{definition} For $k,n\in {\Bbb N}$,  $k\le 2^n$, let
$$g\left(\frac k{2^n}\right):=\dfrac{b_k}{b_{2^n+k}}.$$ \end{definition}

\begin{theorem}   The function $g(x)$ extends to a continuous function on $[0,1]$ that satisfies, for $x\in (2^{-j-1}, 2^{-j})$,
$$g(x)=f(2^{j+1}x-1)[1-jf(2x)]$$
where $f(x)$ is Conway's box function.   \end{theorem}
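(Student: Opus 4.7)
Fix a dyadic $x = k/2^n \in (2^{-j-1}, 2^{-j})$, so $2^{n-j-1} < k < 2^{n-j}$. Applying Theorem 3.6 with the appropriate powers of $2$ gives
\[
b_k = a_{2^{n-j}-k}\, a_{k-2^{n-j-1}},\qquad b_{2^n+k} = a_{2^n-k}\, a_k,
\]
and the definition of $f$ gives
\[
f(2^{j+1}x-1) = f\!\left(\frac{k-2^{n-j-1}}{2^{n-j-1}}\right) = \frac{a_{k-2^{n-j-1}}}{a_k},\qquad f(2x) = \frac{a_k}{a_{2^{n-1}+k}}.
\]
Substituting these into both sides of the claimed formula and cancelling the common factor $a_{k-2^{n-j-1}}/a_k$, the identity $g(x)=f(2^{j+1}x-1)[1-jf(2x)]$ becomes
\[
\frac{a_{2^{n-j}-k}}{a_{2^n-k}} = 1 - \frac{j\,a_k}{a_{2^{n-1}+k}}.
\]
For $j=0$ both sides equal $1$ and the formula is immediate.

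For $j\ge 1$, I would invoke the standard palindrome property $a_{2^n+k}=a_{2^{n+1}-k}$ for $0\le k\le 2^n$ (a routine induction on $n$ via the Stern recurrence, splitting on parity of $k$) to replace $a_{2^n-k}=a_{2^{n-1}+k}$, which is legitimate since $k<2^{n-j}\le 2^{n-1}$. Cancelling the factor $a_{2^{n-1}+k}$ then reduces the whole matter to the single Stern identity
\[
a_{2^{n-1}+k} = a_{2^{n-j}-k} + j\,a_k.
\]

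The crux is establishing this identity. I would first prove the base case $j=1$, namely $a_{2^N+k}=a_{2^N-k}+a_k$ for $0\le k\le 2^N$, by induction on $N$: the even case $k=2l$ reduces at once to the identity at level $N-1$, and the odd case $k=2l+1$ reduces to two applications at level $N-1$ (at $l$ and at $l+1$) which sum to give the claim. For $j\ge 2$, iterate: the $j=1$ identity gives $a_{2^{n-1}+k}=a_{2^{n-1}-k}+a_k$, the palindrome gives $a_{2^{n-1}-k}=a_{2^{n-2}+k}$, and applying the base case again yields $a_{2^{n-1}+k}=a_{2^{n-2}-k}+2a_k$. Repeating this descent $j$ times produces the desired identity, with each step legitimate because $k<2^{n-j}\le 2^{n-r}$ for every $r\le j$.

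It remains to verify that this formula defines a continuous extension of $g$ to $[0,1]$. The formula is continuous on each open interval $(2^{-j-1},2^{-j})$ because $f$ is. At $x=2^{-j}$ (with $j\ge 1$), using $f(1)=1$ and $f(2^{-j+1})=1/j$, the left limit equals $1\cdot(1-j\cdot 1/j)=0$; the right limit (from the piece indexed by $j-1$) equals $f(0)\cdot(\cdots)=0$; both match $g(2^{-j})=0$, which holds because $b_{2^{n-j}}=0$. Continuity at $0$ follows from the bound $0<g(x)<1-j\,f(2x)<1/(j+1)$ on $(2^{-j-1},2^{-j})$, which tends to $0$ as $j\to\infty$. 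The principal obstacle is the Stern identity displayed above; once in hand, the rest of the argument is routine bookkeeping.
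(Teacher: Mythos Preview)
Your proposal is correct and follows essentially the same route as the paper: express $b_k$ and $b_{2^n+k}$ via Theorem~3.6, then reduce the claimed formula to the Stern identity $a_{2^n-k}=a_{2^{n-j}-k}+ja_k$ (equivalently, after your palindrome step, $a_{2^{n-1}+k}=a_{2^{n-j}-k}+ja_k$). The only substantive difference is that the paper simply cites this identity from \cite{N}, whereas you supply a self-contained inductive proof (the $j=1$ base case plus descent via the palindrome); you also give a more explicit continuity check at the endpoints $2^{-j}$ and at $0$ than the paper does.
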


\begin{proof}   Let $x=k/2^n$.    Then  $2^{n-j-1}\le k \le 2^{n-j}$ for some $j\ge 0$.    
Since $2^n\le 2^n+k\le 2^{n+1},$ it follows from Theorem 3.6 that
$$b_k=a_{2^{n-j}-k}a_{k-2^{n-j-1}}\text{ and  } b_{2^n+k}=a_{2^n-k}a_k.$$
By \cite[formulas (2) and (3)]{N}, 
$$a_{2^n-k}=ja_k+a_{2^{n-j}-k}$$
and thus
$$\begin{aligned}  g(x)&=g\left(\frac k{2^n}\right)=\dfrac{b_k}{b_{2^n+k}}=\dfrac{a_{2^{n-j}-k}a_{k-2^{n-j-1}}}{a_{2^n-k}a_k}\\
&=\dfrac{(a_{2^n-k}-ja_k)a_{k-2^{n-j-1}}}{a_{2^n-k}a_k}=\dfrac{a_{k-2^{n-j-1}}}{a_k}\left(1-\dfrac{ja_k}{a_{2^n-k}}\right)\\
&=f\left(\dfrac k{2^{n-j-1}}-1\right)\left[1-jf\left(\dfrac k{2^{n-1}}\right)\right]=f(2^{j+1}x-1)[1-jf(2x)].\end{aligned}$$

The extension of $g(x)$ to a continuous function on $[0,1]$ follows from the facts that $f$ extends to a continuous function on $[0,1]$ and
$f(2^{-j})= 1/(j+1).$
\end{proof}

A scaled version of the graph of $y=f(x)$ appears in the graph of $y=g(x)$.

\begin{figure}[htbp] 
   \centering
 \includegraphics[width=1.5in]{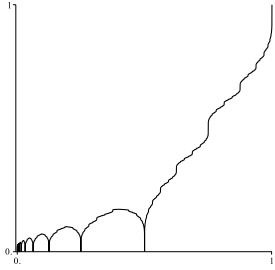}    
 \caption{Singular function associated with $\{b_n\}$}
   \label{fig. 3}
\end{figure}

The restriction of $g(x)$ to $[1/2,1]$ is just a scaled version of $f(x)$:
\begin{corollary}  $g(x)=f(2x-1)$ for $x\in [1/2,1]$.   \end{corollary}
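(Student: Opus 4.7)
The plan is to read off this corollary as the $j=0$ special case of Theorem 3.9, and then close up the endpoints by continuity. Specifically, for $x\in(1/2,1)$ we have $x\in(2^{-1},2^{0})$, so Theorem 3.9 applies with $j=0$ and gives
\[
g(x) = f(2^{0+1}x - 1)\bigl[1 - 0\cdot f(2x)\bigr] = f(2x-1).
\]
That handles the open interval.

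To upgrade to the closed interval $[1/2,1]$, I would invoke continuity of both sides. Theorem 3.9 guarantees that $g$ extends continuously to $[0,1]$, and Conway's box function $f$ is also continuous on $[0,1]$, so $x\mapsto f(2x-1)$ is continuous on $[1/2,1]$. Two continuous functions that agree on the dense subset $(1/2,1)$ necessarily agree on $[1/2,1]$. As a sanity check one can verify the endpoint $x=1/2$ directly from the definition: $g(1/2)=b_1/b_3 = 0/1 = 0 = f(0)$.

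I do not anticipate any real obstacle: the corollary is essentially a renaming of the $j=0$ case of Theorem 3.9, and the only content beyond pure substitution is the appeal to continuity at the two endpoints, which is immediate from the continuity clause already established in the theorem.
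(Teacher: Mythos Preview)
Your proposal is correct and matches the paper's approach: the paper states this corollary without proof, treating it as the immediate $j=0$ specialization of Theorem~3.9, and your continuity argument for the endpoints is the natural way to close the interval.
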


Recall that every positive rational number appears exactly once in the set $\{a_{k+1}/a_k:  k\in{\Bbb N}\}$.    We prove an analogue for the sequence $\{b_k\}$.
We use the expression ``$A=\square$" to mean that $A=n^2$ for some integer $n$.

\begin{theorem}  Every element of $\{(a,b)\in{\Bbb N}^2:  4ab+1=\square\}$ appears exactly once in the sequence $\{(b_k,b_{k+1}): k\in{\Bbb N}\}$.  \end{theorem}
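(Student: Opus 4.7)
My plan is to mirror the proof of Theorem 2.1 via a descent argument on the set $S := \{(a,b) \in \N^2 : 4ab+1 = \square\}$.

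\emph{Step 1: containment in $S$.} First I would check that every pair $(b_k, b_{k+1})$ lies in $S$. Given $k \ge 1$, pick $j$ with $2^j \le k < 2^{j+1}$ and set $m = 2^{j+1} - k - 1$, $n = k - 2^j$, so that $m + n = 2^j - 1$. Theorem 3.6 then gives $b_k = a_{m+1} a_n$ and $b_{k+1} = a_m a_{n+1}$, and Lemma 3.5 supplies $a_{m+1}a_{n+1} - a_m a_n = 1$; hence
$$4 b_k b_{k+1} + 1 = (a_m a_n + a_{m+1} a_{n+1})^2.$$

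\emph{Step 2: the descent.} Next I would establish the inverse-style identity $(c \ominus a) \oplus a = c$ whenever $a \le c$ and $4ac + 1$ is a perfect square, via the calculation $4a(c \ominus a) + 1 = (\sqrt{4ac+1} - 2a)^2$. This also shows that if $(a,b) \in S$ with $a < b$, then $(a, b \ominus a) \in S$ has strictly smaller sum. Rewriting the recurrence as
$$(b_{2n}, b_{2n+1}) = (b_n, b_n \oplus b_{n+1}), \qquad (b_{2n+1}, b_{2n+2}) = (b_n \oplus b_{n+1}, b_{n+1}),$$
and using $x \oplus y > \max(x,y)$ for $x, y \ge 0$, shows that the first family of pairs is strictly increasing in the index and the second strictly decreasing. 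For $m \ge 2$, the parity of $m$ is therefore forced by the order of $b_m$ versus $b_{m+1}$, and the unique equal pair is $(b_1, b_2) = (0,0)$ (since $b_k = 0$ iff $k$ is a power of $2$, by Theorem 3.6).

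\emph{Step 3: existence and uniqueness.} For existence, I would induct on $a + b$: the base case $(0,0) = (b_1, b_2)$, and if $a < b$ then $(a, b \ominus a) \in S$ has smaller sum, so by induction equals some $(b_n, b_{n+1})$, giving $(b_{2n}, b_{2n+1}) = (a, a \oplus (b \ominus a)) = (a,b)$; the case $a > b$ is symmetric. For uniqueness, take a repeated pair $(b_m, b_{m+1}) = (b_n, b_{n+1})$ with $1 \le m < n$ and $m$ minimal. The parity dichotomy forces $m$ and $n$ to share a parity (with $(0,0)$ handled directly since it occurs only at $m=1$), and applying the appropriate half of the recurrence yields the same repetition at $(b_{\lfloor m/2\rfloor}, b_{\lfloor m/2\rfloor + 1}) = (b_{\lfloor n/2\rfloor}, b_{\lfloor n/2\rfloor + 1})$, contradicting minimality. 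The main obstacle I anticipate is producing the identity $(c \ominus a) \oplus a = c$ cleanly and ensuring that the boundary cases (pairs containing a $0$, and the unique equal pair at index $1$) keep the descent safely inside the range $k \ge 1$.
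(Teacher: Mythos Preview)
Your proposal is correct and follows essentially the same descent argument as the paper: both invert $\oplus$ via $\ominus$ and show that iterating drives any pair in $S$ down to $(0,0)=(b_1,b_2)$, after which existence and uniqueness follow by the minimal-counterexample reasoning of Theorem~2.1. The only differences are cosmetic---you measure descent by the sum $a+b$ whereas the paper uses the product $ab$, and you add (via Theorem~3.6 and Lemma~3.5) an explicit verification that each $(b_k,b_{k+1})$ lies in $S$, which the paper leaves implicit.
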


\begin{proof}
Consider the following analogue of the (slow) Euclidean algorithm.

$$M_{\oplus}:  (a,b)\longmapsto\begin{cases} (a,a\ominus b) &\text{if $a<b$, }\\
(a\ominus b, b) &\text{if $b<a$, }\\
\text{stop }&\text{if $a=b$. }\end{cases}$$

Suppose $(a,b)\in{\Bbb N}^2$, with $4ab+1=\square$.   
If $a\ominus b<0$ then it is easy to see that $(a-b)^2<1$ and thus $a=b$.   In this case, since $4a^2+1\neq\square$ unless $a=0$, the only possibility is $a=b=0$.
Hence, $M_{\oplus}((a,b))\in {\Bbb N}^2$ and, if this algorithm terminates at all, it must terminate at $(0,0)$.

With $(a,b)\in{\Bbb N}^2$, with $4ab+1=\square$, let $k:=\sqrt{4ab+1}$.   If $0<a<b$, then $a^2<ak$ and thus
$$a(a\ominus b)=a(a+b-k)=a^2+ab-ak<ab.$$
In general, the product of numbers in $M_{\oplus}((a,b))$ is strictly less than the product $ab$ and thus the algorithm will eventually reach, without loss of generality, $(0,b)$.   If $b=0$ then the algorithm stops.   On the other hand, if $b>0$, it is easy to see that $M_{\oplus}((0,b))=(0,b-1)$, and thus the algorithm will terminate at $(0,0)$.  

Let $B_n:=(b_n,b_{n+1})$.   By the definition of the sequence $\{b_k\}$,  it's easy to see that for $n>1$,
$$M_{\oplus}: B_{2n}, B_{2n+1}\longmapsto B_n$$
and, moreover, if $M_{\oplus}: (a,b)\mapsto B_n$, then either $(a,b)=B_{2n}$ or $(a,b)=B_{2n+1}$.   

If  $(a,b)\in{\Bbb N}^2$, with $4ab+1=\square$ is not of the form $B_n$ for some $n$, then all of its successors under  $M_{\oplus}$, including $(0,0)$, are not either -- a contradiction.   Hence every $(a,b)\in{\Bbb N}^2$, with $4ab+1=\square$
is of the form $B_n$ for some $n$.  

The pair $(0,0)$ appears only once and, in general, no pair appears more than once in $\{ B_n\}$ for, otherwise, there exists a smallest $n>1$ such that $B_n=B_m$ for some $m>n$.   Applying $M_{\oplus}$ to both $B_m$ and $B_n$ forces
$\lfloor n/2\rfloor=\lfloor m/2\rfloor$ and therefore $m=n+1$.   Thus $b_n=b_{n+1}=b_{n+2}$, a contradiction.  
\end{proof}

A generalization of $\oplus$ is as follows:  For a given number $N$, define
$$x\underset{N}{\oplus} y:=x+y+\sqrt{4xy+N}.$$

\begin{remark}  $a,b,a\underset{N}{\oplus}b$ solve 
$$2(x^2+y^2+z^2)-(x+y+z)^2=N.$$

Defining $\underset{N}{\ominus}$ in the obvious manner,  
$$(a\underset{N}{\oplus}b)\underset{N}{\ominus}b=a.$$

Every non-zero complex number $z$ can be represented uniquely as $re^{i\theta}$ for some positive $r$ and some $\theta\in[0,2\pi)$ and so we define $\sqrt z:= \sqrt r e^{i\theta/2}$.    Hence $\underset{N}{\oplus}$ and $\underset{N}{\ominus}$
are well defined for complex $N$.
\end{remark}

We may then generalize $\{b_k\}$.
\begin{definition}  Given a (complex) number $A$, let $c_1=c_2=A$ and, for $n\ge 1$, 
$$\begin{aligned}  &c_{2n}=c_n\\  &c_{2n+1}=c_n\underset{N}{\oplus} c_{n+1}.\end{aligned}$$\end{definition}
It turns out that such a sequence can be expressed as a linear combination of the sequences $\{a_k^2\}$ and $\{b_k\}$. 
We first need a lemma. 

\begin{lemma}  For $k\ge 1$,  
$$a_k^2b_{k+1}+a_{k+1}^2b_k+1=a_ka_{k+1}\sqrt{4b_kb_{k+1}+1}.$$\end{lemma}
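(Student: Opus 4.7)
The plan is to reduce the identity to arithmetic in Stern's sequence via Theorem 3.6. Fix $j$ with $2^j\le k<2^{j+1}$ and set $m:=2^{j+1}-k$ and $n:=k-2^j$, so that $m+n=2^j$ and, by Theorem 3.6,
$$b_k=a_m a_n,\qquad b_{k+1}=a_{m-1}a_{n+1}.$$
Every $k\ge 1$ lies in exactly one such interval, so no separate boundary argument is needed. Since $(m-1)+n=2^j-1$, Lemma 3.5 yields $a_m a_{n+1}-a_{m-1}a_n=1$; writing $p:=a_{m-1}a_n$, the radicand becomes $4p(p+1)+1=(2p+1)^2$, so
$$\sqrt{4b_k b_{k+1}+1}=a_{m-1}a_n+a_m a_{n+1}.$$

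With the square root eliminated, the lemma reduces to the polynomial identity
$$a_k^2 a_{m-1}a_{n+1}+a_{k+1}^2 a_m a_n+1=a_k a_{k+1}\bigl(a_{m-1}a_n+a_m a_{n+1}\bigr),$$
and grouping the four products on the left in two pairs rewrites this as the factored relation
$$(a_k a_{n+1}-a_{k+1}a_n)\,(a_{k+1}a_m-a_k a_{m-1})=1.$$
The second factor equals $1$ directly from Lemma 3.5 applied to the pair $k,\,m-1$, whose indices sum to $2^{j+1}-1$. Everything therefore collapses to the auxiliary identity
$$a_k a_{n+1}-a_{k+1}a_n=1\quad\text{whenever } n=k-2^j \text{ and } 2^j\le k<2^{j+1}.$$

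I expect this auxiliary identity to be the principal obstacle, and I would attack it by induction on $j$. The base $j=0$ is $a_1\cdot 1-a_2\cdot 0=1$. For the inductive step, split by the parity of $k$: if $k=2\ell$ then $n=2\ell'$ with $\ell'=\ell-2^{j-1}$, and applying $a_{2t}=a_t$, $a_{2t+1}=a_t+a_{t+1}$ to each factor telescopes $a_k a_{n+1}-a_{k+1}a_n$ cleanly to $a_\ell a_{\ell'+1}-a_{\ell+1}a_{\ell'}$, which equals $1$ by hypothesis; the odd case $k=2\ell+1$ is entirely parallel and produces the same reduced expression. The point requiring care is bookkeeping: verifying that the index relation $n=k-2^j$ descends to $\ell'=\ell-2^{j-1}$ in both parity cases, so the induction closes. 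Once that is in place, every step above is a direct manipulation of the defining Stern recursion together with the two invocations of Lemma 3.5.
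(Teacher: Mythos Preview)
Your argument is correct and takes a genuinely different route from the paper. The paper proves the lemma by a direct induction on $k$ via the doubling map $k\mapsto 2k,\,2k+1$: it first derives the auxiliary recurrences $s_{2k}=2b_k+s_k$ and $s_{2k+1}=2b_{k+1}+s_k$ for $s_k:=\sqrt{4b_kb_{k+1}+1}$, and then checks the identity algebraically for $2k$ and $2k+1$ from the case $k$, never unwinding $b_k$ in terms of Stern's sequence. You instead invoke Theorem~3.6 to write $b_k=a_m a_n$ and $b_{k+1}=a_{m-1}a_{n+1}$, eliminate the square root via Lemma~3.5, and reduce everything to the factored determinant identity $(a_k a_{n+1}-a_{k+1}a_n)(a_{k+1}a_m-a_k a_{m-1})=1$. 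One factor is Lemma~3.5 verbatim; the other, $a_k a_{n+1}-a_{k+1}a_n=1$ for $n=k-2^j$, is a companion ``difference'' version that you correctly isolate and dispatch by its own short induction on $j$ using the Stern recurrences. Your approach is more structural and explains \emph{why} the radicand is a perfect square (namely $(a_{m-1}a_n+a_m a_{n+1})^2$), at the cost of depending on Theorem~3.6; the paper's approach is self-contained in the $(a_k,b_k)$ recursions but is more computational. The only places to tidy are cosmetic: state explicitly that for $2^j\le k<2^{j+1}$ one also has $2^j<k+1\le 2^{j+1}$, so Theorem~3.6 applies to $k+1$ with the same $j$; and in the odd parity step record that $k=2\ell+1$ with $j\ge 1$ forces $2^{j-1}\le \ell<2^j$, so the inductive hypothesis is available.
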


\begin{proof}  Let $s_k:=\sqrt{4b_kb_{k+1}+1}$.   Note that
$$b_{2k+1}=b_k+b_{k+1}+s_k.$$
Then
$$\begin{aligned}s_{2k}^2&=4b_{2k}b_{2k+1}+1=4b_k(b_k+b_{k+1}+s_k)+1\\
&=4b_k^2+s_k^2+4b_ks_k=(2b_k+s_k)^2\end{aligned}$$ and so 
$$s_{2k}=2b_k+s_k.$$   Similarly, 
$$\begin{aligned}s_{2k+1}^2&=4b_{2k+1}b_{2k+2}+1=4b_{k+1}(b_k+b_{k+1}+s_k)+1\\
&=4b_{k+1}^2+s_k^2+4b_{k+1}s_k=(2b_{k+1}+s_k)^2\end{aligned}$$
and so $$s_{2k+1}=2b_{k+1}+s_k.$$   

Note that 
$$a_1^2b_2+a_2^2b_1+1=1=a_1a_2\sqrt{4b_1b_2+1}$$ 
and so the lemma holds for $k=1$.    

Suppose the lemma holds for a particular $k$. 
We show it works for $2k$ and $2k+1$ and thus, by induction, the lemma will be shown.
$$\begin{aligned} a_{2k}^2b_{2k+1}&a_{2k+1}^2b_{2k}+1=a_k^2(b_k+b_{k+1}+s_k)+(a_k+a_{k+1})^2b_k+1\\
&=a_{k}^2b_{k}+a_{k}^2b_{k+1}+a_k^2s_k+a_k^2b_k+2a_ka_{k+1}b_k+a_{k+1}^2b_{k}+1\\
&=a_k^2(2b_k+s_k)+2a_ka_{k+1}b_k+a_k^2b_{k+1}+a_{k+1}^2b_{k}+1\\
&=a_k^2(2b_k+s_k)+2a_ka_{k+1}b_k+a_ka+{k+1}s_k\\
&=a_k(a_k+a_{k+1})(2b_k+s_k)=a_{2k}a_{2k+1}s_{2k}\end{aligned}$$
and thus the lemma works for $2k$.

$$\begin{aligned} a_{2k+1}^2b_{2k+2}&a_{2k+2}^2b_{2k+1}+1=(a_k+a_{k+1})^2b_{k+1}+a_{k+1}^2(b_k+b_{k+1}+s_k)+1\\
&=a_{k}^2b_{k+1}+2a_ka_{k+1}b_{k+1}+a_{k+1}^2b_{k+1}+a_{k+1}^2b_{k}+a_{k+1}^2b_{k+1}+a_{k+1}^2s_{k}+1\\
&=a_k^2(2b_{k+1}+s_k)+a_k^2b_{k+1}+a_{k+1}^2b_{k}+1+2a_ka_{k+1}b_{k+1}\\
&=a_k^2(2b_{k+1}+s_k)+a_ka_{k+1}s_k+2a_ka_{k+1}b_{k+1}\\
&=a_{k+1}(a_k+a_{k+1})(2b_{k+1}+s_k)=a_{2k+2}a_{2k+1}s_{2k+1}\end{aligned}$$
and thus the lemma works for $2k+1$.
\end{proof}

\begin{theorem}  Given $A,B$, let $c_k:=Aa_k^2+Bb_k.$
Then $\{c_k\}$ has $c_1=c_2=A$ and, for $N=4AB+B^2$,
$$\begin{aligned}  &c_{2n}=c_n\\  &c_{2n+1}=c_n\underset{N}{\oplus} c_{n+1}.\end{aligned}$$
 \end{theorem}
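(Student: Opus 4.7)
The plan is to verify the three claims (initial conditions, even-index recurrence, odd-index recurrence) by direct substitution of $c_k = Aa_k^2 + Bb_k$ into the definitions of $\{a_k\}$ and $\{b_k\}$, with Lemma 3.11 doing all the real work.

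First I would observe the easy parts. Since $a_1=a_2=1$ and $b_1=b_2=0$, we immediately get $c_1=c_2=A$. For the even-index claim, $c_{2n}=Aa_{2n}^2+Bb_{2n}=Aa_n^2+Bb_n=c_n$ follows from the defining recurrences of $\{a_k\}$ and $\{b_k\}$ applied separately.

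The heart of the proof is the odd-index recurrence $c_{2n+1}=c_n\underset{N}{\oplus}c_{n+1}$, which reduces to showing
$$c_{2n+1}-c_n-c_{n+1}=\sqrt{4c_nc_{n+1}+N}.$$
I would compute the left side using $a_{2n+1}=a_n+a_{n+1}$ and $b_{2n+1}=b_n+b_{n+1}+s_n$ (where $s_n:=\sqrt{4b_nb_{n+1}+1}$), obtaining
$$c_{2n+1}-c_n-c_{n+1}=A\bigl((a_n+a_{n+1})^2-a_n^2-a_{n+1}^2\bigr)+Bs_n=2Aa_na_{n+1}+Bs_n.$$
Then I would expand the quantity under the radical:
$$4c_nc_{n+1}+N=4A^2a_n^2a_{n+1}^2+4AB\bigl(a_n^2b_{n+1}+a_{n+1}^2b_n+1\bigr)+B^2(4b_nb_{n+1}+1).$$
Here is where Lemma 3.11 plays its role: it identifies the middle bracket as $a_na_{n+1}s_n$, while the last parenthesis is $s_n^2$ by definition. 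So the right-hand side collapses to
$$4A^2a_n^2a_{n+1}^2+4ABa_na_{n+1}s_n+B^2s_n^2=(2Aa_na_{n+1}+Bs_n)^2,$$
which is exactly the square of the expression we already computed for $c_{2n+1}-c_n-c_{n+1}$.

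The main obstacle is choosing the correct sign when extracting the square root, since $\sqrt{\cdot}$ is set-valued in general and the claim only holds with one branch. In the real non-negative setting this is trivial: both $a_n,a_{n+1},b_n,b_{n+1},s_n\ge 0$, so $2Aa_na_{n+1}+Bs_n$ has the expected sign whenever $A,B\ge 0$. For general complex $A,B$ (and hence complex $N=4AB+B^2$), one appeals to the convention in Remark 3.10 defining $\sqrt{z}=\sqrt{r}\,e^{i\theta/2}$ for $z=re^{i\theta}$, and checks that the branch chosen for $s_n=\sqrt{4b_nb_{n+1}+1}$ (which is on the non-negative reals, hence unambiguous) yields, when combined consistently, the same branch of $\sqrt{4c_nc_{n+1}+N}$ that makes the identity hold. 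Once this sign issue is dispatched, the three-part verification is complete.
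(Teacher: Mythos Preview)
Your proof is correct and follows essentially the same route as the paper's: both arguments use the key lemma $a_k^2 b_{k+1}+a_{k+1}^2 b_k+1=a_k a_{k+1}\sqrt{4b_k b_{k+1}+1}$ to collapse $4c_n c_{n+1}+N$ into the perfect square $(2Aa_na_{n+1}+Bs_n)^2$, then match this against the direct computation of $c_{2n+1}-c_n-c_{n+1}$. Your added discussion of the branch choice for $\sqrt{\,\cdot\,}$ is more explicit than the paper's treatment, which simply writes down the desired square root without comment; note, however, that your numbering is off (the relevant lemma is 3.14 and the square-root convention is in Remark 3.12).
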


\begin{proof} 
$$\begin{aligned} c_kc_{k+1}+AB&=(Aa_k^2+Bb_k)(Aa_{k+1}^2+Bb_{k+1})+AB\\
&=A^2a_k^2a_{k+1}^2+B^2b_kb_{k+1}+AB(a_{k+1}^2b_k+a_k^2b_{k+1}+1)\\
&=A^2a_k^2a_{k+1}^2+B^2b_kb_{k+1}+ABa_ka_{k+1}\sqrt{4b_kb_{k+1}+1}\end{aligned}$$
 and so
 $$\begin{aligned}4c_kc_{k+1}+N&=4A^2a_k^2a_{k+1}^2+4B^2b_kb_{k+1}+B^2+4ABa_ka_{k+1}\sqrt{4b_kb_{k+1}+1}\\
 &=(2Aa_ka_{k+1}+B\sqrt{4b_kb_{k+1}+1})^2\end{aligned}$$
 and thus
 $$\begin{aligned} c_k\underset{N}{\oplus} c_{k+1}&=(Aa_k^2+Bb_k)+(Aa_{k+1}^2+Bb_{k+1})+\sqrt{4c_kc_{k+1}+N}\\
 &=Aa_k^2+Bb_k+Aa_{k+1}^2+Bb_{k+1}+2Aa_ka_{k+1}+B\sqrt{4b_kb_{k+1}+1}\\
 &=A(a_k+a_{k+1})^2+B(b_k+b_{k+1}+\sqrt{4b_kb_{k+1}+1}\\
 &=Aa_{2k+1}^2+Bb_{2k+1}=c_{2k+1}.\end{aligned}$$
 
 Since $$c_{2k}=Aa_{2k}^2+Bb_{2k}=Aa_k^2+Bb_k=c_k,$$
 the theorem is shown. \end{proof}

\begin{example}  Let $N=-3$, $c_1=c_2=1$,  we see that $A=1$, $B=-1$, and thus $c_k=a_k^2-b_k$.   \end{example}

 If to every local cut point $P$ in the fractal CP appearing in figure 2 one attaches a sphere above but tangent to the plane at that point with curvature (1/radius) equal to the sum of the curvatures of the two circles meeting there, then one gets a 3-dimensional generalization of Ford circles.    The curvatures (similarly, the product of local cut points and corresponding curvatures) along any circular arc are from a sequence $\{c_n\}$ for appropriately chosen $N$  (see \cite{N2} and references therein for a discussion of various types of ``Ford spheres").

Consider the sequence $\{b_k\}$ written in tabular form:
$$\begin{matrix} 0&&&&&&&&\\0&1&&&&&&&\\0&2&1&2&&&&&\\0&3&2&6&1&6&2&3&\\0&4&3&10&2&15&6&12&...\\
.&.&.&.&.&.&.&.&.\end{matrix}$$
It is apparent that every column is an arithmetic sequence and, moreover, the defining differences are respectively
$$0,1,1,4,1,9,4,9,...,$$
the squares of Stern's diatomic sequence $\{a_k^2\}$.  
This is, in fact, true.   We shall express this result as a formula.

\begin{theorem} For $0\le k<2^j$,  
$$b_{2^{j+1}+k}=a_k^2+b_{2^j+k}.$$\end{theorem}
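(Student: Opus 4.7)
The plan is to reduce the claimed identity, via Theorem 3.6, to a simple identity for Stern's sequence that has already been used earlier in the paper. First, since $0\le k < 2^j$, we have $2^j \le 2^j+k < 2^{j+1}$ and $2^{j+1}\le 2^{j+1}+k < 2^{j+2}$, so Theorem 3.6 applies to both indices. Applying it with the given $j$ to $2^j+k$ and with $j$ replaced by $j+1$ to $2^{j+1}+k$ yields
$$b_{2^j+k}=a_{2^j-k}\,a_k \qquad\text{and}\qquad b_{2^{j+1}+k}=a_{2^{j+1}-k}\,a_k.$$

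The identity to be proved therefore becomes
$$a_{2^{j+1}-k}\,a_k \;=\; a_k^2 + a_{2^j-k}\,a_k.$$
For $k=0$ both sides vanish because $a_0=0$, so one handles this case separately. For $1\le k < 2^j$ we have $a_k\neq 0$ and may cancel, reducing everything to the single Stern identity
$$a_{2^{j+1}-k}\;=\;a_k+a_{2^j-k}.$$

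This last identity is precisely the formula $a_{2^n-k}=m\,a_k+a_{2^{n-m}-k}$ from \cite[formulas (2) and (3)]{N} specialized to $n=j+1$ and $m=1$; this is the same formula already invoked in the proof of Theorem 3.10, so I would simply cite it. In case a self-contained argument is preferred, it is straightforward to prove $a_{2^{j+1}-k}=a_k+a_{2^j-k}$ directly by induction on $k$, using recurrence (1) and splitting into the parities of $k$ (both subcases collapse to the inductive hypothesis at a smaller index, since the arithmetic subtracts a power of two from $k$).

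The only mild obstacle is bookkeeping on the index ranges: one must check that the hypotheses of Theorem 3.6 really do cover both values $2^j+k$ and $2^{j+1}+k$ for the full range $0\le k<2^j$, and one must treat $k=0$ on its own to avoid dividing by $a_k$. Both are routine, so no substantive difficulty is expected.
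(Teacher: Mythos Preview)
Your proposal is correct and follows essentially the same route as the paper: apply Theorem~3.6 to both $b_{2^j+k}$ and $b_{2^{j+1}+k}$, then reduce to the Stern identity $a_{2^{j+1}-k}=a_k+a_{2^j-k}$ cited from \cite{N}. The only cosmetic difference is that the paper keeps the common factor $a_k$ throughout rather than cancelling it, so it does not split off the case $k=0$.
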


\begin{proof} Assume  $0\le k<2^j$.
Since $2^j\le 2^j+k<2^{j+1}$,   Theorem 3.6 implies
$$b_{2^{j+1}+k}=b_{2^{j}+2^j+k}=a_{2^{j+1}-(2^j+k)}a_{2^j+k-2^j}=a_{2^j-k}a_k.$$
By  \cite[formulas (2) and (3)]{N} ,
$$a_{2^{j+1}-k}=a_k+a_{2^{j}-k}$$ 
and thus
$$b_{2^{j+2}+k}=a_{2^{j+1}-k}a_k=(a_k+a^{2^j-k})a_k=a_k^2+a_{2^j-k}a_k=a_k^2+b_{2^{j+1}+k}.$$
The result follows by induction.\end{proof}

\begin{remark}$\{b_{2k-1}\}$  appears as \cite[A119272]{S},  the product of numerators and denominators in the Stern-Brocot tree. \end{remark}

\begin{remark} For a fixed $(x,y)$,
$z=x\oplus y$ and $z=x\ominus y$ are the two solutions of 
$$2(x^2+y^2+z^2)-(x+y+z)^2=1.$$ \end{remark}


\vskip .4 in
\section{Fibonacci representations}

A \textit{Fibonacci representation} of a number $n$ is a way of writing that number as a sum of distinct Fibonacci numbers.   
One such representation is, of course, the Zeckendorf representation which is gotten by the greedy algorithm and which is characterized by having no two consecutive Fibonacci numbers.    In general, a given $n$ has several Fibonacci representations,   the number of such we call $R_n$.      The sequence $\{R_n\}$ is extremely well studied;   see papers by Klarner \cite{K},  Bicknell-Johnson \cite{B1,B2}, and Stockmeyer \cite{St}, for example.  

A string of 0s and 1s is a finite word with alphabet $\{0,1\}$   (equivalently, an element of $\{0,1\}^*$).  Often we denote such a word by $\omega$.  
We shall think of such strings as Fibonacci representations:   we shall assign a numerical value $[\omega]$ to a string $\omega$ by the formula
$$[i_1i_2...i_k ]= \sum i_j F_{k+2-j}.$$
For example, $[0100]=[0011]=3$ and  $[01010011]=21+8+2+1=32$.

The generating function for $\{R_n\}$ has an obvious product formulation.
\begin{proposition}  The sequence $(R_n)$ satisfies
$$\sum_{n=0}^{\infty} R_nx^n= \prod_{i=2}^{\infty}\left(1+x^{F_i}\right)$$
where $F_n$ denotes the $n$th Fibonacci number.  \end{proposition}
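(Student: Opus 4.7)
The plan is to give a direct combinatorial/bijective proof by interpreting both sides as generating functions enumerating finite subsets of $\{F_2, F_3, F_4, \ldots\}$ weighted by their sum.

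First I would unwind the definition: a Fibonacci representation of $n$ is precisely a finite subset $S \subseteq \{F_2, F_3, \ldots\}$ with $\sum_{F \in S} F = n$ (the indexing starts at $2$ because $F_1 = F_2 = 1$, and we want the Fibonacci numbers to be distinct as values). Thus $R_n$ equals the number of such subsets.

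Next I would verify that the infinite product on the right makes sense as a formal power series. Since $F_i \to \infty$, for any fixed $n$ only finitely many factors can contribute anything beyond their constant term $1$ to the coefficient of $x^n$: if $F_i > n$ then the factor $(1+x^{F_i})$ contributes only $1$ to all monomials of degree at most $n$. Hence for any $N$ large enough that $F_{N+1} > n$, the coefficient of $x^n$ in $\prod_{i=2}^{\infty}(1+x^{F_i})$ equals the coefficient of $x^n$ in the finite product $\prod_{i=2}^{N}(1+x^{F_i})$.

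Expanding this finite product by choosing, from each factor $(1+x^{F_i})$, either $1$ or $x^{F_i}$, distributes as
\[
\prod_{i=2}^{N}(1+x^{F_i}) \;=\; \sum_{S \subseteq \{2,\ldots,N\}} x^{\sum_{i \in S} F_i}.
\]
Grouping by total exponent, the coefficient of $x^n$ counts subsets $S \subseteq \{2,\ldots,N\}$ with $\sum_{i \in S} F_i = n$, which (for $N$ as above) is exactly the number of subsets of $\{F_2, F_3, \ldots\}$ summing to $n$, namely $R_n$.

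There is no real obstacle here; the only item requiring a moment of care is the formal-power-series justification of the infinite product, and this is immediate from the growth of $F_i$. The combinatorial content is a standard subset/generating-function bijection.
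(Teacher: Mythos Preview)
Your argument is correct and is exactly the standard expansion the paper has in mind; the paper does not actually supply a proof but simply calls the product formulation ``obvious,'' and what you have written is the routine justification of that claim.
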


Next, we define the \textit{Fibonacci shift}:  $$\rho(n):=\lfloor n\phi +1/\phi\rfloor$$
that satisfies $\rho([\omega])=[\omega 0]$ for every string $\omega$.    This shift has been studied before;   for example, it appears in \cite[graffiti, p. 301]{GKP}.

 \begin{theorem}   For $c_i\in\{0,1\}$,  $i=2,...,N$,  
$$\rho\left(\sum_{i=2}^N c_iF_i\right) =\sum_{i=2}^N c_iF_{i+1}.$$\end{theorem}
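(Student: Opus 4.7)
The plan is to use Binet's formula (3) to reduce the identity to a geometric-series bound. Binet gives $F_{i+1} - \phi F_i = \overline\phi^i$ (using $\phi - \overline\phi = \sqrt{5}$), so summing against $c_i$ and setting $n := \sum_{i=2}^N c_i F_i$ yields
$$\sum_{i=2}^N c_i F_{i+1} = \phi n + \sum_{i=2}^N c_i \overline\phi^i.$$
Since the left-hand side is an integer, $1/\phi = -\overline\phi$, and $\overline\phi^2 = \overline\phi + 1$, the claim $\rho(n) = \lfloor \phi n + 1/\phi \rfloor = \sum c_i F_{i+1}$ reduces to showing
$$-\overline\phi^2 \;<\; \sum_{i=2}^N c_i \overline\phi^i \;\le\; -\overline\phi,$$
with strict upper inequality whenever $n\ge 1$ (so that $\phi n - \overline\phi$ is irrational and the floor inequality is strict).

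The bounds follow by separating even- and odd-indexed terms. Because $\overline\phi^i>0$ for even $i$ and $\overline\phi^i<0$ for odd $i$, the maximum of $\sum c_i \overline\phi^i$ over $c_i\in\{0,1\}$ is attained by taking $c_i=1$ only on even $i$, and is strictly dominated (for finite $N$) by the geometric series
$$\sum_{k\ge 1} \overline\phi^{2k} \;=\; \frac{\overline\phi^{2}}{1-\overline\phi^{2}} \;=\; \frac{\overline\phi^{2}}{-\overline\phi} \;=\; -\overline\phi.$$
Symmetrically, taking $c_i=1$ only on odd $i\ge 3$ minimizes the sum, and this minimum is strictly bounded below by $\sum_{k\ge 1}\overline\phi^{2k+1} = -\overline\phi^{2}$.

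The edge case $n=0$ is immediate since both sides are $0$. The only real subtlety is that the two geometric limits land \emph{exactly} on the endpoints $-\overline\phi$ and $-\overline\phi^2$ of the required bracket --- this hinges on the defining quadratic identity $1-\overline\phi^2=-\overline\phi$ --- together with tracking strict versus weak inequalities carefully so that the floor function is pinned to the correct integer. Beyond this bookkeeping, the argument is essentially a single application of Binet's formula plus a two-line geometric-series estimate.
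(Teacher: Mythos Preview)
Your proposal is correct and follows essentially the same route as the paper's own proof: both use Binet's identity $\phi F_i = F_{i+1} - \overline\phi^{\,i}$ to reduce the claim to bounding $\sum c_i\overline\phi^{\,i}$, and both obtain that bound by comparing with the even- and odd-index geometric series $\sum_{k\ge 1}\overline\phi^{\,2k}=1/\phi=-\overline\phi$ and $\sum_{k\ge 1}\overline\phi^{\,2k+1}=-1/\phi^{2}=-\overline\phi^{\,2}$. The only cosmetic difference is that the paper keeps both inequalities strict (which is automatic for finite $N$) and does not separate out the $n=0$ edge case, so your extra bookkeeping about strict versus weak inequality is unnecessary but harmless.
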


\begin{proof}  By Binet's formula (3), 
$$\phi F_n=F_{n+1}-\overline\phi^n.$$
For any choice $c_i\in\{0,1\}$ for $i=2,...,N$, note that
$$-1/\phi^2=\sum_{n=1}^{\infty}\overline\phi^{2n+1}<\sum_{i=2}^Nc_i\overline\phi^i<\sum_{n=1}^{\infty}\overline\phi^{2n}=1/\phi$$
and therefore 
$$0<-\sum_{i=2}^Nc_i\overline\phi^i-\overline\phi<1.$$
Hence,
$$\begin{aligned}  \rho\left(\sum_{i=2}^Nc_iF_i\right)&=\left\lfloor\phi\sum_{i=2}^Nc_iF_i-\overline\phi\right\rfloor\\
&=\sum_{i=2}^N c_iF_{i+1}+\left\lfloor -\sum_{i=2}^Nc_i\overline\phi^i-\overline\phi\right\rfloor=\sum_{i=2}^N c_iF_{i+1}. \end{aligned}$$
\end{proof}

In terms of $\rho(n)$, we may define $\{R_n\}$ recursively.  
Clearly, $R_0=R_1=1$.   A representation of $n$ either ends in $0$ in which case $n=[\omega 0]$ where $\rho([\omega])=n$ or else it ends in $1$ in which case $n=[\omega 1]$ and so $n-1=[\omega 0]=\rho([\omega])$.    Hence, for all $n\ge 1$,
$$R_n:=\sum_{\rho(i)\in\{n,n-1\}} R_i.$$

Note that the function $\rho_2(n):=\rho(\rho(n))=\lfloor n\phi^2+1/\phi\rfloor$ is an example of a Beatty sequence (i.e., of the form $\lfloor an+b\rfloor$) and so has a complementary Beatty sequence, namely $T(n):=\lfloor n\phi+2/\phi\rfloor$.
For example,  $$\rho_2(n)=0, 3, 5, 8, 11, 13, 16, 18, 21, 24,...$$ and $$T(n)=1,2, 4,6,7, 9,10,12,14,15,...$$  
The following characterization could be used as a new definition of $\{R_n\}$.

\begin{theorem}  For $n\ge 1$, and $T(n):=\lfloor n\phi+2/\phi\rfloor$,
 $$R_{\rho_2(n)}=R_n+R_{n-1}$$
 and
 $$R_{T(n)}=R_n.$$\end{theorem}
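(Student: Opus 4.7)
The plan is to work with the single-step recursion $R_m = R_{\rho^{-1}(m)} + R_{\rho^{-1}(m-1)}$ (adopting the convention $R_{\rho^{-1}(j)} := 0$ when $j$ is not in the image of $\rho$) together with three preliminary algebraic identities. First I would establish: (a) $\rho_2 = \rho \circ \rho$, which is immediate from the definitions and $\phi^2 = \phi + 1$; (b) $T(n) = \rho(n+1) - 1$, by rewriting both sides as $\lfloor (n+2)\phi \rfloor - c$ using $1/\phi = \phi - 1$; and (c) the first difference $d(k) := \rho(k) - \rho(k-1)$ always lies in $\{1,2\}$, with $d(k) = 2$ if and only if $\{k\phi\} \geq 1/\phi^2$, so that $\rho(k) - 1$ lies in the image of $\rho$ precisely when $d(k) = 1$.

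For $R_{T(n)} = R_n$, I apply the recursion at $m = T(n)$. Identity (b) gives $T(n) \in \{\rho(n), \rho(n)+1\}$, so one of $T(n), T(n)-1$ equals $\rho(n)$ with $\rho$-preimage $n$; the remaining task is to show that the other does not lie in the image. If $T(n) = \rho(n)$, I need $\rho(n) - 1$ to be missing, i.e., $d(n) = 2$; if $T(n) = \rho(n) + 1$, I need $\rho(n) + 1$ to be missing, i.e., $d(n+1) = 2$. Both follow by comparing $\{n\phi\}$ against the thresholds $1/\phi^2$ and $2/\phi^2 = 4 - 2\phi$, which partition $[0,1)$ into three intervals: the middle interval is where $T(n) = \rho(n)$ and already forces $d(n) = 2$, while the two outer intervals correspond to $T(n) = \rho(n)+1$ and each force $d(n+1) = 2$ after computing $\{(n+1)\phi\}$ from $\{n\phi\}$.

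For $R_{\rho_2(n)} = R_n + R_{n-1}$, I apply the recursion twice to get
\[ R_{\rho_2(n)} \;=\; R_n \,+\, R_{\rho^{-1}(\rho(n)-1)} \,+\, R_{\rho^{-1}(\rho_2(n)-1)}. \]
The crucial lemma is the complementarity $d(n) + d(\rho(n)) = 3$. Granting this, split on $d(n)$. If $d(n) = 1$, then $\rho(n) - 1 = \rho(n-1)$, contributing $R_{n-1}$ to the middle term, and $d(\rho(n)) = 2$ removes $\rho_2(n) - 1$ from the image, killing the last term. If $d(n) = 2$, the middle term vanishes; $d(\rho(n)) = 1$ gives $\rho_2(n) - 1 = \rho(\rho(n)-1)$, so the last term is $R_{\rho(n)-1}$; identity (b) rewrites $\rho(n) - 1 = T(n-1)$, making this $R_{n-1}$ by the first formula proved above. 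The $n=1$ edge case is handled directly by $R_1 = R_0 = 1$.

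The principal obstacle is establishing the complementarity $d(n) + d(\rho(n)) = 3$. Starting from $\rho(n) = \lfloor (n+1)\phi \rfloor - 1$ and using $\phi^2 = \phi + 1$, a direct calculation reduces $\rho(n)\phi$ modulo $1$ to $\{n\phi\} - \phi\{(n+1)\phi\}$. Splitting on whether $\{n\phi\} < 1/\phi^2$ (equivalently, whether $d(n)$ is $1$ or $2$), the fractional part $\{\rho(n)\phi\}$ simplifies to either $1 - \{n\phi\}/\phi$ (which is automatically greater than $1/\phi^2$) or $(1 - \{n\phi\})/\phi$ (automatically less than $1/\phi^2$); these are exactly the two cases ensuring $d(n)$ and $d(\rho(n))$ land on opposite sides of the threshold, and the irrationality of $\phi$ prevents any boundary coincidence. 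Once this dichotomy is in hand, the rest of the argument is mechanical bookkeeping.
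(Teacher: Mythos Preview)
Your argument is correct. For $R_{T(n)}=R_n$ you and the paper follow the same route: both use $T(n)=\rho(n+1)-1$ and then check that among $\{T(n),T(n)-1\}$ only $\rho(n)$ lies in the image of $\rho$. The paper does this in one stroke, observing that $\rho(n-1)<\rho(n+1)-2\le\rho(n)\le\rho(n+1)-1<\rho(n+1)$ sandwiches both candidates strictly between $\rho(n-1)$ and $\rho(n+1)$, so the only possible preimage is $n$; your three-interval split on $\{n\phi\}$ reaches the same conclusion with more work.

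For $R_{\rho_2(n)}=R_n+R_{n-1}$ you genuinely diverge from the paper. The paper gives a bijective count on Fibonacci-representation strings: representations of $\rho_2(n)$ ending in $00$ come from representations of $n$ via $\omega\mapsto\omega 00$, and those not ending in $00$ come from representations of $n-1$. You instead iterate the one-step recursion and rely on the arithmetic lemma $d(n)+d(\rho(n))=3$, then close the $d(n)=2$ case by invoking the already-proved identity $R_{T(n-1)}=R_{n-1}$. The paper's approach is more conceptual, since it explains the identity directly in terms of what $R_n$ counts and needs no fractional-part calculus. Your approach is purely arithmetic and self-contained in the Beatty-sequence world; the complementarity $d(n)+d(\rho(n))=3$ is a pleasant byproduct (equivalent, via $\rho_2(n)=\rho(n)+n$, to the paper's observation that $\rho_2(n+1)-\rho_2(n)\in\{2,3\}$), and your method would transfer more readily to other sequences defined by the same recursion without an underlying combinatorial interpretation.
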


\begin{proof}   
Since $\phi\in(1,2)$, 
$\rho(n)\in\{\rho(n+1)-1,\rho(n+1)-2\}.$
Since $2\phi>3$, $(n-1)\phi+1/\phi\le (n+1)\phi+1/\phi-3$
and so $\rho(n-1)<\rho(n+1)-2.$
Note that $$T(n)=\lfloor n\phi+2/\phi\rfloor=\lfloor(n+1)\phi+1/\phi\rfloor-1=\rho(n+1)-1$$
and therefore
$$R_{T(n)}=\sum_{\rho(i)\in\{\rho(n+1)-1,\rho(n+1)-2\}} R_i=R_n.$$

We show the first equation in the theorem by a counting argument.  
By the definition of $\rho(n)$, $\rho_2(n)=\rho(n)+n$ and so 
$$\rho_2(n+1)-\rho_2(n)=\rho(n+1)-\rho(n)+1\in\{2,3\}.$$
For a given $n$,  if $n=[\omega]$ then 
$\rho(\rho(n))=[\omega00]$ and 
$\rho(\rho(n+1))$ equals either $[\omega10]$ or $[\omega11]$.  

Suppose $\rho_2(n+1)-\rho_2(n)=2$.  
The map $\omega\mapsto \omega00$ is a bijection from representations of $n$ to the representations of $\rho_2(n)$ ending in 00 while the
map $\omega\mapsto\omega10$ is a bijection from representations of $n-1$ to the representations of $\rho_2(n)$ not ending in 00.  Hence the first equation holds.   

A similar argument holds when $\rho_2(n+1)-\rho_2(n)=3$.
\end{proof}

\begin{remark}  The sequence $\{R_n\}$ is thus analogous to the alternative form of Stern's sequence:
$$a_{2n}=a_n,  a_{2n-1}=a_n+a_{n-1}.$$\end{remark}

For every word $\omega:=\omega_0\omega_1...\omega_n\in\{0,1\}^*$,  we let $|\omega|:=n+1$ denote the length of $\omega$ and define a point in the complex plane
$$P(\omega):=\sum_{k=0}^n \phi^{-k}(2\omega_k-1-i).$$
We form a graph ${\bf G}$ by putting an edge between $P(\omega)$ and $P(\omega j)$ for $j=0,1$,   $\omega\in\{0,1\}^*$.   This graph is illustrated in Figure 4 below.  
 \begin{figure}[htbp] 
   \centering
   \includegraphics[angle=0, width=4 in]{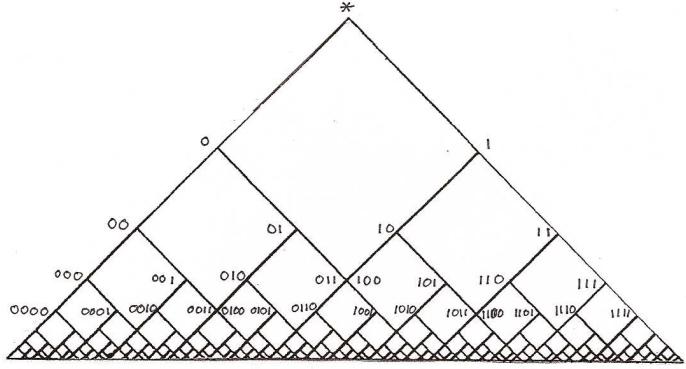} 
   \caption{Fibonacci Representation Graph with words in $\{0,1\}^*$.}
   \label{fig. 4}
\end{figure}
Note further that $P(\omega)=P(\omega')$ iff  $|\omega|=|\omega'|$ and $[\omega]=[\omega']$.   Hence, we may consistently assign the integer $[\omega]$ to each vertex $P(\omega)$ of the graph.    This shows that $R_{[\omega]}$ is the number of downward paths from $P(*)$ to $P(\omega)$ and the graph can be thought of as a kind of hyperbolic Pascal's triangle.    In fact, the portion between 0,01,010,0101,... and 1,10,101,1010,... is really just the ``Fibonacci diatomic array'' appearing in \cite{B2}.  

For $v$ a vertex of the Fibonacci representation graph, let $[v]$ be the number of downward paths from the top vertex to $v$.  

\begin{lemma}  Along the $n$th row of the graph ${\bf G}$, the function $[v]$ forms an increasing sequence of consecutive integers $0,\dots, F_{n+2}-2$.  \end{lemma}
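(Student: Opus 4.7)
The plan is to prove the statement by induction on $n$. The base case $n=1$ is immediate: the only vertex is $P(*)$ with value $0 = F_{3}-2$.

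For the inductive step, I assume the vertices of row $n$, listed in order along the row, carry values $0,1,\ldots,F_{n+2}-2$. Each such vertex, say $v=P(\omega)$ with $[v]=i$, has two children $P(\omega 0)$ and $P(\omega 1)$ in the word tree; by Theorem 4.2 their values are $\rho(i)$ and $\rho(i)+1$ respectively (the second because appending a $1$ contributes $F_2=1$). Thus the list of children values, in parent order with the left child before the right child, is
\[
\rho(0),\;\rho(0)+1,\;\rho(1),\;\rho(1)+1,\;\ldots,\;\rho(F_{n+2}-2),\;\rho(F_{n+2}-2)+1.
\]

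The core algebraic step is to show that this list, regarded as a set, equals $\{0,1,\ldots,F_{n+3}-2\}$. Because $\phi\in(1,2)$ one has $\rho(i+1)-\rho(i)\in\{1,2\}$, so each consecutive pair $\{\rho(i),\rho(i)+1\}$ and $\{\rho(i+1),\rho(i+1)+1\}$ glues into a block of consecutive integers, and iterating on $i$ yields a single range. Its left endpoint is $\rho(0)=0$, and its right endpoint follows from Theorem 4.2 applied to $F_{n+2}-2 = F_2+F_3+\cdots+F_n$, which gives $\rho(F_{n+2}-2) = F_3+F_4+\cdots+F_{n+1} = F_{n+3}-3$, so the final entry is $F_{n+3}-2$.

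To promote this from a set statement to an increasing sequence along the row, I would strengthen the induction hypothesis to include the geometric fact that consecutive vertices of row $n$ are separated in real part by at least $2\phi^{-(n-1)}$, with equality precisely when the two inner children of that pair coincide. Since the children of $P(\omega)$ sit at horizontal offsets $\pm\phi^{-(n-1)}$ from $P(\omega)$, this bound prevents children of distinct row-$n$ vertices from interleaving, so the displayed list \emph{is} the left-to-right ordering of row $n+1$. Equal consecutive entries (which occur exactly when $\rho(i+1)=\rho(i)+1$) are identified as the same graph vertex by the characterization $P(\omega)=P(\omega')$ iff $|\omega|=|\omega'|$ and $[\omega]=[\omega']$; after identification the row reads $0,1,\ldots,F_{n+3}-2$ strictly, which also closes the strengthened inductive hypothesis. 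The main obstacle is propagating this gap-and-equality bookkeeping through the induction; the algebraic portion (unioning the $\rho$-pairs and computing the endpoints) is routine given Theorem 4.2.
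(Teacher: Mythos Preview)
Your argument is correct and follows the same skeleton as the paper's (very terse) proof: both identify the rightmost value on row $n$ as $F_{n+2}-2$ and invoke $\rho(i+1)-\rho(i)\in\{1,2\}$ to fill in the interval without gaps. The paper computes the endpoint by iterating $m\mapsto\rho(m)+1$ from $0$ whereas you apply Theorem 4.2 to the representation $F_{n+2}-2=F_2+\cdots+F_n$; these are equivalent. Your additional geometric separation argument (that consecutive row-$n$ vertices are at horizontal distance at least $2\phi^{-(n-1)}$, with equality exactly when the inner grandchildren merge) is something the paper simply omits---the paper asserts ``increasing'' but never justifies the left-to-right order. Your strengthened hypothesis does close: the two possible gaps on row $n$ are exactly $2\phi^{-(n-1)}$ and $2\phi^{-(n-2)}$, and passing to row $n+1$ sends these to a coincidence and to $2\phi^{-n}$ respectively, while sibling pairs contribute $2\phi^{-(n-1)}$, so the dichotomy propagates. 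Thus your proposal is a more complete version of the paper's own argument rather than a different route.
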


\begin{proof}  Iterates of $\rho(n)+1$ starting at 0 yields the sequence $0,1,3,6,11,..., F_{n+2}-2,...$ (provable  by induction).   Hence the last value of $[v]$ in the $n$th row is $F_{n+2}-2$.     Since $\rho(n+1)-\rho(n)\in\{1,2\}$, the lemma follows.  \end{proof}

A consequence is the following surprising formula:
$$\rho(\rho(\rho(n)+1)))=\rho(\rho(\rho(n))+1)+1.$$

 This graph has numerous interesting properties:
 \begin{itemize}
 \item  Every quadrilateral in the closure of the graph is either a square or a golden rectangle.
  \item All the squares (actually hexagons) are congruent in hyperbolic space with area $\ln\phi$ (and, as hexagons, each edge has length $\ln\phi$).  The figure is thus an aperiodic tiling of part of the upper half-plane ${\bf H}$ (and can be extended to all of ${\bf H}$ ) where all the tiles are congruent!  
  \item The points along any row, when embedded in ${\Bbb  R}$ form part of a one-dimensional quasicrystal.   The lengths of the segments, appropriately scaled, form a word:  $\phi, 1,  \phi, \phi, 1, \phi, ...$, the  ``Fibonacci word''.   
 \item The vertices form a quasicrystal in ${\bf H}$ .  
 \item The graph is the Cayley graph of the ``Fibonacci monoid"  $\langle a,b| abb=baa\rangle$.  
  \item The graph can be constructed by the following recursive procedure starting with a single vertex;   from each of the latest generation of vertices, draw two edges going southeast and southwest respectively,  connect if a hexagon can be formed.   Repeat. 
 \end{itemize}

Something new with respect to the study of $\{R_n\}$ is the development of an analog of Conway's box function.
For $k<F_{n-1}$, define
$$q(k,F_n):=R_k/R_{F_n+k}.$$

\begin{lemma}  For $k=0,..., F_{n-1}-1$,
$$q(T(k),F_{n+1})=q(k,F_n)$$
and
$$q(\rho_2(k),F_{n+2})=q(k,F_n)*q(k-1,F_n)$$
where $*$ denotes ``mediant addition".  \end{lemma}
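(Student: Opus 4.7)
The plan is to reduce both identities to a single shift identity, and then read them off by substitution into $q(k,F_n)=R_k/R_{F_n+k}$ together with the characterizations $R_{T(m)}=R_m$ and $R_{\rho_2(m)}=R_m+R_{m-1}$ from Theorem 4.4. Concretely, the first step is to show that, for every $k<F_{n-1}$,
$$\rho(F_n+k)=F_{n+1}+\rho(k).$$
Since $k<F_{n-1}<F_n$, the Zeckendorf representation of $k$ involves only Fibonacci numbers of index at most $n-2$, so $F_n+k$ is a sum of distinct Fibonacci numbers with $F_n$ appearing at index $n$. Shifting every index up by one inside this representation, Theorem 4.2 delivers $\rho(F_n+k)=F_{n+1}+\rho(k)$.

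For the first equation, I would invoke the identity $T(m)=\rho(m+1)-1$ recorded in the proof of Theorem 4.4. Because $k+1\le F_{n-1}$, the shift identity applied to $k+1$ gives
$$T(F_n+k)=\rho(F_n+k+1)-1=F_{n+1}+\rho(k+1)-1=F_{n+1}+T(k).$$
Combined with $R_{T(m)}=R_m$, this yields $R_{F_{n+1}+T(k)}=R_{T(F_n+k)}=R_{F_n+k}$ and $R_{T(k)}=R_k$, so
$$q(T(k),F_{n+1})=\frac{R_{T(k)}}{R_{F_{n+1}+T(k)}}=\frac{R_k}{R_{F_n+k}}=q(k,F_n).$$

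For the second equation (with $k\ge 1$, so that $q(k-1,F_n)$ is defined), the identity $\rho_2(m)=m+\rho(m)$ noted in the proof of Theorem 4.4 combines with the shift identity to give
$$\rho_2(F_n+k)=(F_n+k)+(F_{n+1}+\rho(k))=F_{n+2}+\rho_2(k).$$
Applying $R_{\rho_2(m)}=R_m+R_{m-1}$ both to $\rho_2(k)$ and to $\rho_2(F_n+k)$ then produces
$$q(\rho_2(k),F_{n+2})=\frac{R_k+R_{k-1}}{R_{F_n+k}+R_{F_n+k-1}}=q(k,F_n)*q(k-1,F_n).$$
The only place where real work is needed is the shift identity, and the only delicate point there is verifying that $F_n$ does not appear in the Zeckendorf representations of $k$ or of $k+1$; the hypothesis $k<F_{n-1}$ was chosen exactly so this holds (with the edge case $k+1=F_{n-1}$ permissible because $F_n\ne F_{n-1}$). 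Hence I anticipate no serious obstacle, only routine bookkeeping.
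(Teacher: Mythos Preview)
Your proposal is correct and follows essentially the same route as the paper's own proof: both arguments rest on the identities $T(F_n+k)=F_{n+1}+T(k)$ and $\rho_2(F_n+k)=F_{n+2}+\rho_2(k)$, combined with Theorem~4.4. The only difference is that the paper asserts these shift identities without justification, whereas you derive them explicitly from the Fibonacci-shift identity $\rho(F_n+k)=F_{n+1}+\rho(k)$ via Theorem~4.2, and carefully handle the edge case $k+1=F_{n-1}$; this is a welcome elaboration but not a genuinely different approach.
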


\begin{proof}  Note that 
$$T(n)=\rho(n+1)-1$$
and so, if $k\le F_{n-1}-1$,
$$T(F_n+k)=F_{n+1}+T(k).$$
Then $$\begin{aligned} q(k, F_n)&=\dfrac{R_k}{R_{F_n+k}}=\dfrac{R_{T(k)}}{R_{T(F_n+k)}}\\&=\dfrac{R_{T(k)}}{R_{F_{n+1}+T(k)}}=q(T(k),F_{n+1})\end{aligned}$$ and the first equation follows.   
Similarly,   
$$\begin{aligned} q(k, F_n)*q(k-1,F_n)&=\dfrac{R_k}{R_{F_n+k}}*\dfrac{R_{k-1}}{R_{F_n+k-1}}=\dfrac{R_k+R_{k-1}}{R_{F_n+k}+R_{F_n+k-1}}\\&=\dfrac{R_{\rho_2(k)}}{R_{\rho_2(F_n+k)}}=\dfrac{R_{\rho_2(k)}}{R_{F_{n+2}+\rho_2(k)}}\\&=q(\rho_2(k), F_{n+2})\end{aligned}$$
and the second equation follows.
\end{proof}
 
As a consequence, if,  as $n\rightarrow\infty$, $k/F_n$ converges to $x\in[0,1/\phi]$, then $q(k,F_n)$ converges to some value, say $Q(x)$.  The function $Q: [0,1/\phi]\rightarrow[0,1]$ is  increasing and continuous.   

\begin{figure}[htbp] 
   \centering
   \includegraphics[angle=0, width=3 in]{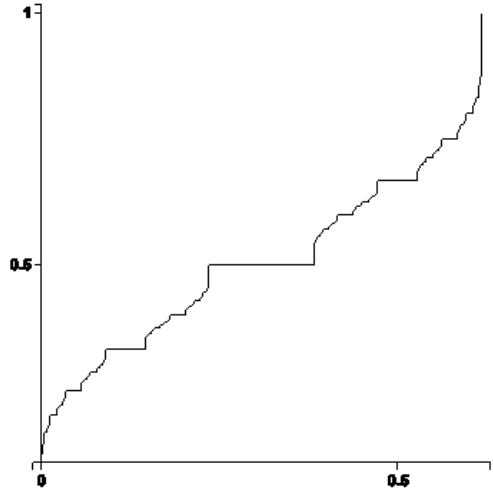} 
   \caption{Analogue of Conway's box function}
   \label{fig. 5}
\end{figure}

Note, however, it is not strictly increasing.   

\begin{lemma}
For $j=0,..., F_{n-1}-1$,  
$$R_{F_{n+2}+j}=R_{F_n+j}+R_j.$$
\end{lemma}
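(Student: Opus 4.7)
The plan is to prove this by a direct combinatorial argument that partitions the Fibonacci representations of $F_{n+2}+j$ according to whether $F_{n+2}$ is used as a summand. The key preliminary observation is that, since $j<F_{n-1}$, we have $F_{n+2}+j<F_{n+2}+F_{n+1}=F_{n+3}$, so no representation of $F_{n+2}+j$ can contain any Fibonacci number larger than $F_{n+2}$; hence the partition is exhaustive over representations using only $F_2,\dots,F_{n+2}$.

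In the first case (representations that use $F_{n+2}$), the complementary summands must total $j$. The bound $j<F_{n-1}$ forces every Fibonacci representation of $j$ to use only $F_2,\dots,F_{n-2}$, so the choice of $F_{n+2}$ never conflicts with any summand in a representation of $j$. This case therefore contributes exactly $R_j$ representations. In the second case (representations that omit $F_{n+2}$), I would first show that $F_{n+1}$ must appear: otherwise all summands come from $\{F_2,\dots,F_n\}$, whose total is $F_2+F_3+\cdots+F_n=F_{n+2}-2$, strictly less than $F_{n+2}+j$. Once $F_{n+1}$ is forced, the remaining summands represent $F_{n+2}+j-F_{n+1}=F_n+j$ using Fibonacci numbers in $\{F_2,\dots,F_n\}$, and because $F_{n+1}=F_n+F_{n-1}>F_n+j$, every representation of $F_n+j$ already uses only such $F_i$. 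Hence this case contributes $R_{F_n+j}$.

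Adding the two disjoint contributions yields the desired identity $R_{F_{n+2}+j}=R_{F_n+j}+R_j$. The only nonroutine step is verifying that $F_{n+1}$ is forced when $F_{n+2}$ is absent, which reduces to the telescoping Fibonacci identity $\sum_{i=2}^{n}F_i=F_{n+2}-2$; the rest is a straightforward disjoint-union count backed by the monotonicity $F_{n-1}\le j+1 \Rightarrow$ contradictions at each step of the case analysis.
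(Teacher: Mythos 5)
Your proof is correct and complete. The paper actually states this lemma with no proof at all, so there is nothing to compare it against; your argument fills that gap. The case analysis is sound: the bound $j<F_{n-1}$ gives $F_{n+2}+j<F_{n+3}$, so $F_{n+2}$ is the largest admissible summand; representations containing $F_{n+2}$ biject with representations of $j$ (no conflict since every part of a representation of $j$ is at most $j<F_{n-1}$); and in the complementary case the identity $\sum_{i=2}^{n}F_i=F_{n+2}-2$ correctly forces $F_{n+1}$ to appear, after which $F_n+j<F_{n+1}$ guarantees that stripping off $F_{n+1}$ is a bijection onto \emph{all} representations of $F_n+j$. One could alternatively wring the identity out of the paper's own machinery --- Theorem 4.3's recurrences $R_{T(n)}=R_n$ and $R_{\rho_2(n)}=R_n+R_{n-1}$ together with the reflection $R_{F_m+k}=R_{F_{m+1}-k}$ invoked in the following theorem --- but that route requires tracking how $T$ and $\rho_2$ interact with the intervals $[F_m,F_{m+1})$ and is considerably less transparent than your direct count. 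Your approach has the added benefit of explaining combinatorially why the ``crushed array'' columns are arithmetic with common difference $R_j$, which is exactly how the paper motivates this lemma.
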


\begin{theorem}
The inverse of $Q$ satisfies, on its irrational points of continuity,  
$$Q^{-1}(x)=\sum_{k=1}^{\infty}\dfrac{(-1)^{k+1}}{\phi^{2(c_1+c_2+...+c_k)-1}}$$
where $x$ has continued fraction decomposition $x=1/(c_1+1/(c_2+1/(c_3+...)))$.  \end{theorem}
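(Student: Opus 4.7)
The plan is to model the proof after the classical derivation of Minkowski's formula $?(x) = 2\sum (-1)^{k+1}/2^{a_1+\cdots+a_k}$, using the Stern--Brocot-like tree on the values $\{q(k, F_n)\}$ provided by Lemmas~4.6 and~4.8 in place of the Farey tree on $\mathbb{Q} \cap [0,1]$.

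First, I would show that $Q:[0, 1/\phi] \to [0,1]$ is a well-defined continuous non-decreasing surjection. Monotonicity of the level-$n$ sequence $(q(k, F_n))_{k}$ follows by induction from Lemma~4.8, and the mediant relation in Lemma~4.6 then guarantees that the level-$(n{+}2)$ values refine the level-$n$ values by Stern--Brocot insertion; the resulting nested intervals collapse to points, giving both existence and continuity of $Q$.

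The heart of the proof is to establish the self-similarity
$$Q^{-1}\!\left(\frac{1}{c+x'}\right) \;=\; \frac{1}{\phi^{2c-1}} \;-\; \frac{1}{\phi^{2c}}\, Q^{-1}(x'),\qquad c \in \mathbb{N},\ x'\in[0,1], \eqno(*)$$
which is the Fibonacci analogue of $?(1/(1+x)) = 1 - ?(x)/2$. To derive $(*)$ I would track what the two shifts of Lemma~4.6 do to a bracketing interval $[q(k{-}1, F_n), q(k, F_n)]$: a single CF digit $c$ of the argument of $Q^{-1}$ corresponds to $c$ consecutive mediant insertions on one side of the tree, each performed by an application of $\rho_2$ which consumes two levels of $n$ (accounting for the factor $\phi^{-2c}$), capped by one application of $T$ which switches branches (producing both the $\phi^{-1}$ asymmetry in the leading term and the sign flip). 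A routine manipulation then shows that the series $\psi(x) := \sum_{k\ge 1} (-1)^{k+1}/\phi^{2(c_1+\cdots+c_k)-1}$ satisfies $(*)$ as well, so the identification $Q^{-1}(x) = \psi(x)$ follows by induction on the length of the CF expansion at every rational with finite CF, which are dense.

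Finally, for irrational $x$, iterating $(*)$ telescopes to
$$Q^{-1}(x) \;=\; \sum_{k=1}^{N} \frac{(-1)^{k+1}}{\phi^{2 S_k - 1}} \;+\; \frac{(-1)^N}{\phi^{2 S_N}}\, Q^{-1}(x^{(N)}),$$
where $S_k := c_1 + \cdots + c_k$ and $x^{(N)}$ is the $N$-th CF tail of $x$. Since $|Q^{-1}(x^{(N)})| \le 1/\phi$ and $S_N \to \infty$ for irrational $x$, the remainder vanishes and the stated series emerges. At rational $x$, the inverse $Q^{-1}$ takes an interval of values (corresponding to a flat of $Q$), which is precisely the reason the theorem restricts to irrational points of continuity.

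The principal obstacle is the self-similarity $(*)$. In Conway's binary setting both Stern--Brocot branches advance the level index by the same amount, making the self-similarity transparent. Here, however, $T$ advances $n$ by one while $\rho_2$ advances it by two, and the normalizing constants $1/\phi$ and $2/\phi$ appearing in the definitions introduce parity issues. The delicate step is aligning $c$ consecutive $\rho_2$-insertions with one $T$-flip to a single CF digit $c$, and in particular getting the exponent $2c-1$ (rather than $2c$) correct on the leading term; the rest of the proof is routine telescoping and a continuity argument once this accounting is in place.
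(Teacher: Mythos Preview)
Your overall plan matches the paper's: the paper, too, establishes the functional equation you call $(*)$ (it writes it as $(\phi-Q^{-1}(x))/\phi^{2n}=Q^{-1}(1/(n+x))$, which is the same thing) and then iterates to obtain the series. So once $(*)$ is in hand, your telescoping argument is fine and identical in spirit to the paper's.

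The gap is in how you propose to obtain $(*)$. You attribute the sign flip and the ``branch switch'' to a single application of $T$, but Lemma~4.6 says precisely that $T$ leaves $q$-values unchanged: $q(T(k),F_{n+1})=q(k,F_n)$. So $T$ cannot be what produces the reflection; following $T$ and $\rho_2$ through Lemma~4.6 alone will only ever generate left-mediants $q(k,F_n)*q(k-1,F_n)$ and level shifts, never the orientation reversal you need for the continued-fraction alternation. Your accounting for the exponent $2c-1$ via ``$c$ applications of $\rho_2$ plus one $T$'' is therefore not supported.

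What the paper uses instead is the palindrome identity $R_{F_m+k}=R_{F_{m+1}-k}$ together with (iterated) Lemma~4.8. The reflection $k\mapsto F_{m+1}-k$ becomes $x\mapsto \phi-x$ in the limit $k/F_m\to x$, and Lemma~4.8 iterated $n$ times gives $R_{F_{m+2n}+j}=R_{F_m+j}+nR_j$, which is exactly where the integer $n$ in $1/(n+Q(x))$ comes from. Combining these two facts yields
\[
\frac{1}{n+q(k,F_m)}=\frac{R_{F_{m+1}-k}}{R_k+nR_{F_{m+1}-k}}=q(F_{m+1}-k,F_{m+2n}),
\]
which in the limit is $1/(n+Q(x))=Q((\phi-x)/\phi^{2n})$, i.e.\ your $(*)$ in one line. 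If you want to rescue your tree-tracking approach you will need to invoke this palindrome symmetry (or an equivalent) to get the reflection; Lemma~4.6 by itself does not supply it.
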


\begin{proof}  Recall that $R_{F_m+k}=R_{F_{m+1}-k}$ and so
$$\dfrac1{n+q(k,F_m)}=\dfrac{R_{F_{m+1}-k}}{R_k+nR_{F_{m+1}-k}}=q(F_{m+1}-k,F_{m+2n}).$$
Letting $k/F_m\rightarrow x$ where $x$ is a point of continuity of $B$, we see that 
$$\dfrac1{n+Q(x)}=Q\left(\dfrac{\phi-x}{\phi^{2n}}\right).$$
We may then rewrite:
$$\dfrac{\phi-Q^{-1}(x)}{\phi^{2n}}=Q^{-1}\left(\dfrac1{n+x}\right)$$
and the theorem follows.  \end{proof}
The function $Q(x)$ extends past $1/\phi$ but is no longer monotonic.   

\begin{figure}[htbp] 
   \centering
   \includegraphics[angle=0, width=3 in]{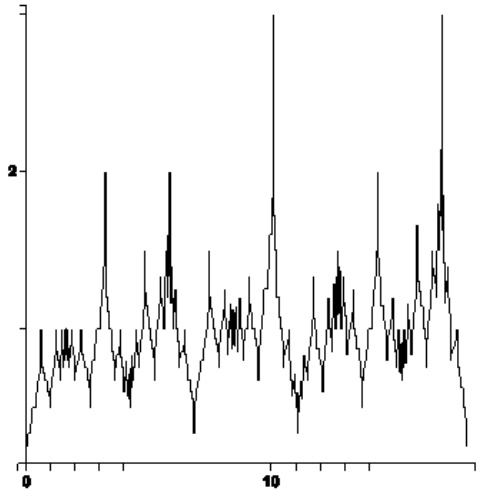} 
   \caption{Analogue of Conway's box function;  larger domain}
   \label{fig. 6}
\end{figure}

Patterns can be found by looking at the ``crushed array'' which is found by stacking rows of terms $R_{F_{n}-1},..., R_{F_{n+1}-2}$ sliding terms to the left on rows:
$$\begin{array}{ccccccccccccccccc} 1&&&&&&&&&&&&&&&&\\1&2&&&&&&&&&&&&&&&\\1&2&2&&&&&&&&&&&&&&
\\1&3&2&2&3&&&&&&&&&&&&\\1&3&3&2&4&2&3&3&&&&&&&&\\1&4&3&3&5&2&4&4&2&5&3&3&4&&&&\\.&.&.&.&.&.&.&.&.&.&.&.&.&.&.&.&.
\end{array}$$
The $k$th column satisfies: $x_{n+2}=x_n+c$ with common difference $c=R_k$  ($R_0=0$).

Alternatively, $x_{n+1}=x_n+x_{n-1}-x_{n-2}$ (a ``dying rabbit" sequence).

$$x_{n+1}=x_n+x_{n-1}-x_{n-2}$$
Characteristic polynomial factors $x^3-x^2-x+1=(x-1)^2(x+1)$
so every example is of the form $x_n=a+bn+c(-1)^n$.  Hence,   $\{x_{2n}\}$ and $\{x_{2n+1}\}$ are arithmetic sequences.
$$x_{n+1}=x_n+x_{n-1}-x_{n-3}$$  e.g., \cite[A023434]{S}
$x^4-x^3-x^2+1=(x-1)(x^3-x-1),$
so every example is of the form $a+br_1^n+cr_2^n+dr_3^n$ where 
$r_1$ is the ``plastic constant", 1.32471795..., the smallest Pisot number, and  $r_2, r_3$ are its algebraic conjugates.   
Such examples are always a constant plus a Padovan sequence $y_{n+1}=y_{n-1}+y_{n-2}$.  E.g., \cite[A000931]{S}
$$x_{n+1}=x_n+x_{n-1}-x_{n-1},$$
is always a constant sequence.

\vskip .4 in
\section{Extending Binet's formula}

Let $s_F(n)$ be the number of terms in the Zeckendorf representation of $n$ (e.g., $s_F(27)=3$).   Equivalently, $s_F(n)$ is the least number of Fibonacci numbers that sum to $n$.    This sequence, for $n=0,1,...$, is 
[A007895] and starts 
$$0,1,1,1,2,1,2,2,1,2,2,2,3,1,2,2,2,3,2,3,3,...$$
Using notation of the previous section, we see that $s_F(n)$ satisfies the recursion:
$$s_F([\omega 0])=s_F([\omega]),  s_F([\omega 01])=s_F([\omega])+1$$
which translates to
$$s_F(\rho(n))=s_F(n), s_F(\rho_2(n)+1)=s_F(n)+1$$
where $\rho(n)$ is the ``Fibonacci shift" defined in Section 4 (just after Proposition 4.1).  
The crushed array for this sequence is
$$\begin{array}{ccccccccccccccccc} 1&&&&&&&&&&&&&&&&\\1&&&&&&&&&&&&&&&&\\1&2&&&&&&&&&&&&&&&\\1&2&2&&&&&&&&&&&&&&
\\1&2&2&2&3&&&&&&&&&&&&\\1&2&2&2&3&2&3&3&&&&&&&&\\1&2&2&2&3&2&3&3&2&3&3&3&4&&&\\.&.&.&.&.&.&.&.&.&.&.&.&.&.&.&.&.
\end{array}$$
Note that columns are constant and that the limiting row is $s_F(n)+1$.   

Replacing $s_2(n)$ by $s_F(n)$ in Binet's formula for Stern's sequence yields our third variant of Stern's sequence:
$$c_{n+1}=\displaystyle\sum_{k=0}^n \sigma^{s_F(k)}\overline\sigma^{s_F(n-k)}.$$
The sequence starts, for $n=1,2,...$, 
$$1,1,2,3,2,4,3,3,6,4,6,6,4,8,6,7,10,6,9,7,5,11,8,....$$
It is always integral since $c_{n+1}$ is an algebraic integer in ${\Bbb Z}[\sigma]$ invariant under complex conjugation.  

A crushed array for this sequence is:
$$\begin{array}{ccccccccccccccccc} 1&&&&&&&&&&&&&&&&\\1&&&&&&&&&&&&&&&&\\2&3&&&&&&&&&&&&&&&\\2&4&3&&&&&&&&&&&&&&\\3&6&4&6&6&&&&&&&&&&&&
\\4&8&6&7&10&6&9&7&&&&&&&&&\\5&11&8&11&13&8&14&10&9&15&9&13&11&&&&\\7&15&11&15&19&12&19&14&11&21&14&19&19&&&&\end{array}$$
The first column,  $x_n:=\{c_{F_n}\}$ apparently satisfies the Padovan recurrence:  $x_{n+2}=x_n+x_{n-1}$.  Moreover, every column is apparently a ``dying rabbit" sequence:  $x_{n+1}=x_n+x_{n-1}-x_{n-3}$ or, more precisely, 
if $x_n:=c_{F_n+k}+c_k$, then $x_{n+2}=x_n+x_{n-1}$.     This is indeed the case which we now prove. 

\begin{theorem}  For $k\le F_{n-2}$, $c_{F_{n+2}+k}=c_{F_{n}+k}+c_k+c_{F_{n-1}+k}$. \end{theorem}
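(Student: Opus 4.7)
My plan is to mimic the generating-function proof of Stern's Binet-type formula presented earlier in the paper (where $G(x)=(1+\sigma x)G(x^2)$ yields $|G(x)|^2=F(x)$), with Zeckendorf expansions in place of binary ones. Set
$$S_N(x):=\sum_{n=0}^{F_N-1}\sigma^{s_F(n)}x^n,\qquad T_N(x):=S_N(x)\overline S_N(x),$$
the overbar conjugating only the coefficients. Since every $n\in[F_N,F_{N+1})$ decomposes uniquely as $F_N+n'$ with $n'\in[0,F_{N-1})$ and $s_F(F_N+n')=1+s_F(n')$, the partial series satisfies the Fibonacci-type recursion $S_{N+1}(x)=S_N(x)+\sigma x^{F_N}S_{N-1}(x)$. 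The definition of $c_{n+1}$ as a convolution gives $[x^j]T_N(x)=c_{j+1}$ whenever $j<F_N$.

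The main lemma ($\star$) I would establish is that, for $0\le k'\le F_{N-1}$ (with the convention $c_0:=0$),
$$[x^{F_N+k'-1}]T_N(x)=c_{F_N+k'}-c_{k'}.$$
This follows from a ``missing pairs'' count: the pairs $(a,b)$ with $a+b=F_N+k'-1$, $a,b\ge 0$, that are omitted from $T_N$ (which forces $a,b\le F_N-1$) are those with $a\in[0,k'-1]$ (so $b\in[F_N,F_{N+1})$, giving $\overline\sigma^{s_F(b)}=\overline\sigma\,\overline\sigma^{s_F(b-F_N)}$) or symmetrically $a\in[F_N,F_N+k'-1]$. These contribute $\overline\sigma c_{k'}$ and $\sigma c_{k'}$, summing to $(\sigma+\overline\sigma)c_{k'}=c_{k'}$.

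Now set $m:=F_{n+2}+k-1$. Since $k\le F_{n-2}\le F_{n+1}$, ($\star$) at $(N,k')=(n+2,k)$ gives $[x^m]T_{n+2}=c_{F_{n+2}+k}-c_k$. Expanding $T_{n+2}$ via $S_{n+2}=S_{n+1}+\sigma x^{F_{n+1}}S_n$ yields
$$T_{n+2}=T_{n+1}+\sigma x^{F_{n+1}}S_n\overline S_{n+1}+\overline\sigma x^{F_{n+1}}\overline S_nS_{n+1}+x^{2F_{n+1}}T_n,$$
whose last term contributes nothing at $x^m$ because $m-2F_{n+1}=k-1-F_{n-1}<0$. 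Expanding $T_{n+1}$ the same way, a degree check shows that only the $x^{2F_n}T_{n-1}$ subterm survives at $x^m$, so $[x^m]T_{n+1}=[x^{F_{n-1}+k-1}]T_{n-1}$; a second application of ($\star$) at $(N,k')=(n-1,k)$, valid exactly because $k\le F_{n-2}=F_{(n-1)-1}$, evaluates this to $c_{F_{n-1}+k}-c_k$. For the middle terms, the factorization $S_n\overline S_{n+1}=T_n+\overline\sigma x^{F_n}S_n\overline S_{n-1}$ and a direct extraction yield $[x^{F_n+k-1}]S_n\overline S_{n+1}=c_{F_n+k}-c_k+\overline\sigma c_k=c_{F_n+k}-\sigma c_k$ (using $1-\overline\sigma=\sigma$); multiplying by $\sigma$, adding the conjugate, and using $\sigma+\overline\sigma=1$ together with $\sigma^2+\overline\sigma^2=-1$ collapses the pair to $c_{F_n+k}+c_k$.

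Summing the three nonzero pieces gives $[x^m]T_{n+2}=(c_{F_{n-1}+k}-c_k)+(c_{F_n+k}+c_k)+0=c_{F_{n-1}+k}+c_{F_n+k}$, which, equated with $c_{F_{n+2}+k}-c_k$, yields the theorem. The chief obstacle is the two-scale ``missing pairs'' bookkeeping together with the phase arithmetic of the sixth roots of unity; the hypothesis $k\le F_{n-2}$ is precisely what guarantees that every shifted index $F_N+i$ arising in the argument still lies in $[F_N,F_{N+1})$, so that $s_F(F_N+i)=1+s_F(i)$ holds throughout.
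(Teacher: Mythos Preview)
Your argument is correct. The recursion $S_{N+1}=S_N+\sigma x^{F_N}S_{N-1}$, the convolution identity $[x^j]T_N=c_{j+1}$ for $j<F_N$, and the ``missing pairs'' lemma $(\star)$ all check out, and the degree bounds you invoke to kill the unwanted pieces of $T_{n+1}$ and $T_{n+2}$ are exactly right under the hypothesis $k\le F_{n-2}$. The phase arithmetic $\sigma+\overline\sigma=1$, $\sigma^2+\overline\sigma^2=-1$ is used correctly.

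The paper's own proof takes a different route. Rather than working with truncated generating functions, it encodes $c_n$ as $G(\Delta_{[0,n)})$ where $\Delta_I$ is the pointwise difference of the string $s_I:=(s_F(i))_{i\in I}$ and its reversal, and $G$ sums $\sigma$ to the entries. The author then writes $s_{[0,F_{n+2}+k)}$ explicitly as a concatenation of seven blocks (using the same self-similarity $s_{[F_N,F_N+j)}=s_{[0,j)}^+$ that underlies your recursion for $S_N$), reverses and subtracts block by block, and applies $G$. The intermediate identity $(*)$ there, $c_{F_N+k}=c_k+G(\Delta_{[k,F_N)})$, is precisely your lemma $(\star)$ once one observes that $G(\Delta_{[k,F_N)})=[x^{F_N+k-1}]T_N$. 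So the two proofs share the same key lemma and the same arithmetic of sixth roots of unity, but your generating-function packaging replaces the explicit seven-block decomposition by two applications of the recursion $T_{N+1}=T_N+\sigma x^{F_N}S_{N-1}\overline S_N+\overline\sigma x^{F_N}\overline S_{N-1}S_N+x^{2F_N}T_{N-1}$ together with degree counting. Your approach is arguably more systematic and would extend more readily to deeper recursions; the paper's string calculus is more bare-handed but makes the block structure of $s_F$ completely explicit.
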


\begin{proof}    
Given a  string $X$ of integers, let $\overline X$ denote the reverse of string $X$, let $X^+$ denote the string $X$ with  1 added to every integer, and let $X^-$ denote the string $X$ with  1 subtracted from every integer (e.g., if $X=1223$, then $\overline X=3221$,  $X^+= 2334$, and $\overline X^-=2110$).  If $X:=t_0...t_{k-1}$, let $G(X):=\sum_{j=0}^{k-1} \sigma^{t_j}.$  Finally, given strings $X$, $Y$,  we let $XY$ denote the concatenation of the two strings and $X-Y$ denote the pointwise difference (e.g.,   if $X=457$ and $Y=123$ then $XY=457123$ and $X-Y=334$).

Let $s_n:=s_F(n)$ be the number of terms in the Zeckendorf representation of $n$.   For any interval $I$, let $s_I$ denote the \textit{string} $s_{i_1}s_{i_2}...s_{i_k}$ where $i_1<i_2<...<i_k$ and $\{i_1, i_2, ... , i_k\}=I\cap {\Bbb N}$.
Then $c_{n}=G((s_{[0,n)}-\overline s_{[0,n)}))$  where the difference of two strings is the string of differences.   Since we will use this formula, we let $\Delta_I=s_I-\overline s_I$  so that $c_n=G(\Delta_{[0,n)})$.  

By the definition of $s$, it's clear that $s_{[F_n, F_n+k)}=s_{[0,k)}^+$ if $k\le F_{n-1}$.    Since
$$s_{[0,F_{n}+k)}=s_{[0,k)}s_{[k,F_{n})}s_{[F_n,F_n+k)}=s_{[0,k)}s_{[k,F_{n})}s_{[0,k)}^+,$$
it follows that 
$$\overline s_{[0,F_n+k)}=\overline s_{[0,k)}^+ \overline s_{[k,F_{n})}\overline s_{[0,k)},$$
and thus
$$\Delta_{[0,F_n+k)}=\Delta_{[0,k)}^- \Delta_{[k,F_{n})}\Delta_{[0,k)}^+.$$
Hence, because  $\sigma^{-1}+\sigma=1$, 
$$c_{F_n+k}=\sigma^{-1} c_k +G(\Delta_{[k,F_{n})}) +\sigma c_k = c_k+ G(\Delta_{[k,F_{n})}).\eqno{(*)}$$

Assuming  $k\le F_{n-2}$,  we see that
$$\begin{aligned}  s_{[0,F_{n+2}+k)}&=s_{[0,k)}s_{[k,F_{n})}s_{[F_n,F_{n}+k)}s_{[F_n+k,F_{n+1})}s_{[F_{n+1},F_{n+1}+k)}s_{[F_{n+1}+k,F_{n+2})}s_{[F_{n+2},F_{n+2}+k)}\\
&=s_{[0,k)}s_{[k,F_{n})}s_{[0,k)}^+s_{[k,F_{n-1})}^+s_{[0,k)}^+s_{[k,F_{n})}^+s_{[0,k)}^+\end{aligned}$$
and thus
$$\overline s_{[0,F_{n+2}+k)}=\overline s_{[0,k)}^+\overline s_{[k,F_{n})}^+\overline s_{[0,k)}^+\overline s_{[k,F_{n-1})}^+\overline s_{[0,k)}^+\overline s_{[k,F_{n})}\overline s_{[0,k)}.$$
Hence, 
$$\Delta_{[0,F_{n+2}+k)}=\Delta_{[0,k)}^-\Delta_{[k,F_{n})}^-\Delta_{[0,k)}\Delta_{[k,F_{n-1})}\Delta_{[0,k)}\Delta_{[k,F_{n}+k)}^+\Delta_{[0,k)}^+.$$
Applying $G$:
$$c_{F_{n+2}+k}=\sigma^{-1} c_k + \sigma^{-1} G(\Delta_{[k,F_{n})}) +c_k + G(\Delta_{[k,F_{n-1})}) +c_k+\sigma G(\Delta_{[k,F_{n})}) +\sigma c_k.$$
Again, since $\sigma^{-1}+\sigma=1$,  and by (*), we have
$$c_{F_{n+2}+k}=3c_k +G(\Delta_{[k,F_{n})}) + G(\Delta_{[k,F_{n-1})})=c_k+c_{F_{n}+k}+c_{F_{n-1}+k}.$$
\end{proof}

There are many patterns in the crushed array.   Two such patterns can be proven by induction based on the previous theorem.  
\begin{corollary}$c_{F_n}+c_{F_{n-1}+2}=c_{F_n+1}$ and
$c_{F_n+1}=c_{F_{n+1}+2}$ for all $n$.  \end{corollary}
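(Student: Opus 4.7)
The plan is to establish both identities simultaneously by induction on $n$, using Theorem 5.1 to unroll each $c_{F_m+k}$ into $c_{F_{m-2}+k}+c_k+c_{F_{m-3}+k}$. Before starting, I would record two arithmetic facts that drive everything: from the defining formula $c_{n+1}=\sum_{k=0}^n \sigma^{s_F(k)}\overline\sigma^{s_F(n-k)}$ one reads off $c_0=0$ (empty sum) and $c_1=c_2=1$ (using $\sigma+\overline\sigma=1$), hence the identity
$$c_0+c_2=c_1.$$
This single equality will absorb all the ``debris'' that the theorem produces at each induction step.

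The base step is just direct inspection of the listed values of $\{c_n\}$: verify (B) at $n=3,4,5$ (e.g.\ $n=3$: $c_3=c_5=2$) and (A) at $n=5,6,7$ (e.g.\ $n=5$: $c_5+c_5=c_6$, i.e.\ $2+2=4$). These finitely many checks are routine, and I would include them in a single displayed table.

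For the inductive step of (B), take $n\ge 6$ and apply Theorem 5.1 twice, once with $(m,k)=(n,1)$ and once with $(m,k)=(n+1,2)$; both applications satisfy the side condition $k\le F_{m-4}$ precisely because $n\ge 6$. The resulting expansions
$$c_{F_n+1}=c_{F_{n-2}+1}+c_1+c_{F_{n-3}+1}, \qquad c_{F_{n+1}+2}=c_{F_{n-1}+2}+c_2+c_{F_{n-2}+2}$$
pair up term-by-term under the inductive hypothesis (B) at $n-2$ and $n-3$ (which give $c_{F_{n-2}+1}=c_{F_{n-1}+2}$ and $c_{F_{n-3}+1}=c_{F_{n-2}+2}$), while $c_1=c_2$ matches the middle terms, proving (B) at $n$.

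For the inductive step of (A), take $n\ge 8$ and expand $c_{F_n}$, $c_{F_{n-1}+2}$, $c_{F_n+1}$ via Theorem 5.1 with $k=0$, $k=2$, $k=1$ respectively; the constraint $k\le F_{m-4}$ is met for $n\ge 8$. The inductive hypothesis (A) at indices $n-2$ and $n-3$ then fuses $c_{F_{n-2}}+c_{F_{n-3}+2}$ into $c_{F_{n-2}+1}$ and $c_{F_{n-3}}+c_{F_{n-4}+2}$ into $c_{F_{n-3}+1}$. What remains is the scalar comparison $c_0+c_2=c_1$, which is exactly the identity noted at the outset, closing the induction. The main obstacle is bookkeeping — keeping track of which expansions are legal (the $k\le F_{n-2}$ hypothesis of Theorem 5.1) and ensuring that enough base cases have been verified to seed both inductions; beyond that, every step is linear and collapses to the tiny fact $0+1=1$.
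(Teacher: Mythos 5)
Your proof is correct and follows exactly the route the paper intends: the paper offers no details beyond the remark that both identities ``can be proven by induction based on the previous theorem,'' and your double induction via Theorem 5.1 (with the expansions at $k=0,1,2$, the bookkeeping of the side condition $k\le F_{m-4}$, and the residual identity $c_0+c_2=c_1$) is a correct and complete realization of that sketch. The only caveat is that the corollary's ``for all $n$'' should be read as ``for all sufficiently large $n$'' (the identities fail for $n\le 2$ and $n\le 4$ respectively), which your choice of base cases already handles correctly.
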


We have many other questions or \textit{ apparent } properties,  all waiting for a proof (though, of course, of varying difficulty).

\begin{itemize}
\item Five inequalities:  $c_{\sigma_2(n)+1}\ge c_{\lfloor n\phi^2\rfloor}\ge c_{\lfloor n\phi\rfloor}\ge c_{\sigma(n)}\ge c_n\ge 0.$
\item The minimum of each row in the crushed array is the leftmost element.  (If true, then the last inequality above, $c_n\ge 0$, is true).  
\item If $c_n\ge 0$ for all $n$, then what do these numbers count?
\item The following sequences have crushed arrays with columns satisfying $x_{n+1}=x_{n}+x_{n-1}-x_{n-j}$ for given $j$:
$$\begin{aligned} & \{s_F(n)\} \text{ has } j=1,\\
& \{R_n\} \text{ has } j=2,\\
& \{c_n\} \text{ has } j=3.
\end{aligned}$$
Is there a general principle at work in this progression?   Is there a similarly defined sequence with $j=4$ for example?
\end{itemize}

\medskip

\noindent MSC2010: 11B83

\end{document}